\renewcommand{\emph}{\textsl}
\newcommand{\longiso}{\stackrel{\sim}{\longrightarrow}}
\newcommand{\iso}{\stackrel{\sim}{\to}}
\newcommand{\tto}{\longrightarrow}
\newcommand{\onto}{\twoheadrightarrow}
\newcommand{\into}{\hookrightarrow}
\newcommand{\GL}{\operatorname{GL}}
\newcommand{\SL}{\operatorname{SL}}
\newcommand{\Cl}{\operatorname{Cl}}
\newcommand{\Hom}{\operatorname{Hom}}
\newcommand{\End}{\operatorname{End}}
\newcommand{\ann}{\operatorname{ann}}
\newcommand{\PSp}{\operatorname{PSp}}
\DeclareMathOperator{\Der}{Der}
\DeclareMathOperator{\Inn}{Inn}
\DeclareMathOperator{\Mat}{Mat}
\newcommand{\Tens}{\mathsf{T}}
\newcommand{\define}{\underset{\text{\textnormal{def}}}{=}}
\newcommand{\kk}{\mathbbm{k}}
\newcommand{\Z}{\mathbb{Z}}
\newcommand{\R}{\mathbb{R}}
\newcommand{\E}{\mathbb{E}}
\newcommand{\e}{\varepsilon}
\newcommand{\x}{\mathbf{x}}
\newcommand{\osum}{\operatorname{\mathsf{orb}}}
\newcommand{\RPhi}{{\rm \Phi}}
\newcommand{\RDelta}{{\rm \Delta}}
\newcommand{\RLambda}{{\rm \Lambda}}
\newcommand{\rA}{\mathsf{A}}
\newcommand{\rB}{\mathsf{B}}
\newcommand{\rC}{\mathsf{C}}
\newcommand{\rD}{\mathsf{D}}
\newcommand{\rG}{\mathsf{G}}
\newcommand{\rE}{\mathsf{E}}
\newcommand{\rF}{\mathsf{F}}
\newcommand{\We}{\mathcal{W}}
\newcommand{\cD}{\mathcal{D}}
\newcommand{\cG}{\mathcal{G}}
\newcommand{\Sy}{\mathcal{S}}
\newcommand{\upin}{\text{\ \rotatebox{90}{$\in$}\ }}
\newcommand{\feqn}[1]{%
        \setlength\fboxrule{1pt}
	\setlength\fboxsep{3mm}
        \fbox{$\displaystyle{#1}$}
        }
\theoremstyle{plain}
\newtheorem{thm}{Theorem}[section]
\newtheorem{lem}[thm]{Lemma}
\newtheorem{cor}[thm]{Corollary}
\newtheorem{prop}[thm]{Proposition}
\theoremstyle{definition}
\theoremstyle{remark}
\begin{document}

\title{Multiplicative Invariants of Root Lattices}
\author{Jessica Hamm}
\address{Department of Mathematics, Winthrop University, 
Rockhill, SC 29730}
\email{hammj@winthrop.edu}



\subjclass[2010]{13A50, 17B22, 20F55}

\keywords{%
Multiplicative group action, multiplicative invariant theory, root system, Weyl group, 
root lattice weight lattice, fundamental invariants,
class group, Hironaka decomposition, Veronese algebra
}

\begin{abstract}
We describe the multiplicative invariant algebras of the root lattices of
all irreducible root systems under the action of the Weyl group. In each case,
a finite system of fundamental invariants is determined and the class group of the invariant
algebra is calculated.
In some cases, a presentation and a Hironaka decomposition of the invariant algebra is given.
\end{abstract}

\maketitle


\section{Introduction}

The most traditional setting of invariant theory arises from a linear action of a group
$G$ on an $n$-dimensional vector space $V$ over a field $\kk$.
This action can be extended to the 
symmetric algebra $S(V)$; a choice of basis for $V$ yields an explicit
isomorphism $S(V) \cong \kk[x_1,\ldots,x_n]$.
This type of action is commonly called a \emph{linear action}; the resulting
algebra of invariants $R=S(V)^G$ is often referred to as an algebra of \emph{polynomial
invariants}.
The ring theoretic properties of polynomial invariants have been thoroughly explored, 
especially for \emph{finite groups} $G$ to which we will restrict ourselves in this
paper. 
We will focus on a different branch of
invariant theory known as \emph{multiplicative invariant theory}. This theory has
emerged relatively recently and has only been studied systematically during the
past $30$ years, beginning with the work of D. Farkas in the 80's \cite{F1, F2}.
Prior to Farkas, multiplicative invariants,
also known as ``exponential invariants" or ``monomial invariants", made only a few isolated appearances in
the literature,
notably in the work of Bourbaki \cite{Bou} and Steinberg \cite{Stei}.

Multiplicative invariants arise from an action of a group $G$ on 
a lattice $L \cong \Z^n$, that is, from an
integral representation $G \to \GL(L) \cong \GL_n(\Z)$. Any such action can be uniquely
extended to a $G$-action on the group algebra
$\kk[L] \cong \kk[x_{1}^{\pm{1}},x_{2}^{\pm{1}},...,x_{n}^{\pm{1}}]$ over any
(commutative) base ring $\kk$.
Within the Laurent polynomial algebra $\kk[x_{1}^{\pm{1}},x_{2}^{\pm{1}},...,x_{n}^{\pm{1}}]$,
the lattice $L$ becomes the multiplicative group of units that is
generated by the ``variables" $x_i$ and their inverses; so $L$ is the group of
monomials in the Laurent polynomial algebra. The action of $G$ stabilizes $L$ and hence
maps monomials to monomials; this explains the terms ``multiplicative" or ``monomial"
actions. Despite some obvious formal similarities in the basic setup,
multiplicative invariant theory and its linear counterpart exhibit many strikingly different features. 
For one, other than for linear actions, 
the calculation of the multiplicative invariant algebra $\kk[L]^G$ of an arbitrary group $G$
can always be reduced to a suitable \emph{finite} quotient of $G$.
In particular, multiplicative invariant algebras $\kk[L]^G$ are always 
affine $\kk$-algebras; the determination of an explicit finite set of algebra generators, also called 
\emph{fundamental invariants}, is one of the primary goals of multiplicative invariant theory.
For details on the foregoing and further background on multiplicative invariant theory 
we refer the reader to the monograph \cite{Lo3}.

As mentioned above, the investigation of multiplicative invariants can be reduced to 
the case of finite groups. 
By a classical theorem of Jordan \cite{J}, for each given $n$, there
are only finitely many finite subgroups $G \subseteq \GL_n(\Z)$ up to conjugacy, and hence there
are only finitely many possible multiplicative invariant algebras 
$\kk[x_{1}^{\pm{1}},x_{2}^{\pm{1}},...,x_{n}^{\pm{1}}]^G$
up to isomorphism. For $n=2$, there are $12$ conjugacy classes of finite subgroups
$1 \neq G \subseteq \GL_2(\Z)$ and the corresponding multiplicative invariant
algebras have been determined in \cite{Lo3}. While it would certainly be 
desirable to give a similar description of all multiplicative invariant algebras
for other small values of $n$, the practical realization of such a database is a rather 
daunting task for now.
This is partly due to the fact that the number of conjugacy classes of
finite subgroups of $\GL_n(\Z)$ increases rather sharply with $n$. Furthermore, the 
algorithmic side to multiplicative invariant theory is still in the early stages and there 
are many challenges to overcome.
Other than for linear actions on polynomial algebras,
the degree of Laurent polynomials is not generally preserved under multiplicative actions, and hence
algorithms for the computation of 
multiplicative invariants cannot be based on a notion of degree. Moreover,
multiplicative invariants should ideally be investigated over the base ring $\kk = \Z$ rather than a field.
A recent paper of Kemper \cite{gKxx} addresses these issues and offers a first step in 
``computational arithmetic invariant theory'', but the general algorithms developed in that paper 
need to be further optimized, and perhaps specifically tailored for multiplicative invariants,
before the envisioned database
in ranks $3$ and higher becomes feasible.

Instead of aiming for an exhaustive treatment in low ranks, we have chosen 
to contribute to this database by calculating
the multiplicative invariants for certain especially important
lattices of arbitrarily large rank. Specifically we work with the so-called 
\emph{root lattices} arising in Lie theory. 
Each root system naturally leads to two lattices
on which the Weyl group of the root system acts, the root lattice and the weight lattice. 
The structure of multiplicative
invariant algebras of weight lattices under the Weyl group action is known by a theorem of
Bourbaki: they are always
polynomial algebras \cite[Th{\'e}or{\`e}me VI.3.1]{Bou}. Root lattices, on the other hand,
generally have more complicated multiplicative invariants. Since Weyl groups 
are reflection groups, it is known that multiplicative invariant algebras of root lattices are affine 
normal monoid algebras \cite[Theorem 6.1.1]{Lo3}, but the structure of the monoid in question
is a priori unclear and needs to be determined. This is the starting point of our investigations in this article.
In doing so, we may assume that the underlying root system is \emph{irreducible}, because 
the multiplicative invariant algebra of the root lattice of an arbitrary root system $\RPhi$
is just the tensor product of the multiplicative invariant algebras of the root lattices of the 
irreducible components of $\RPhi$; see Section~\ref{SS:irreducible}. 
Recall that all irreducible root systems 
have been classified and must be one of the following types: 
the four classical series, 
$\rA_n$ ($n\ge 1$), $\rB_n$ ($n\ge 2$), $\rC_n$ ($n\ge 3$), and $\rD_n$ ($n\ge 4$) or 
the five exceptional types, 
$\rE_6, \rE_7, \rE_8, \rF_4,$ and $\rG_2$ \cite[Th{\'e}or{\`e}me VI.4.3]{Bou}. 
The root lattices of $\rG_2, \rF_4,$ and  $\rE_8$ are 
all identical to their weight lattices; see Bourbaki \cite[Planches VII, VIII, IX]{Bou}. 
Thus, by Bourbaki's Theorem, 
we know that the multiplicative invariant algebras are isomorphic to polynomial algebras. 
This leaves only the exceptional types $\rE_6$ and $\rE_7$ to consider.
As for the classical root systems, the invariants for root lattices of type 
$\rA_n$ and $\rB_n$ have been calculated in \cite{Lo3}, but we amplify these results by 
explicitly giving primary invariants and some computational methods related to $\rA_n$ in this paper. 
For completeness, we include the earlier calculations along with our contributions, followed by the 
calculations for the two remaining classical types, $\rC_n$ and $\rD_n$, and for the exceptional types
$\rE_6$ and $\rE_7$,
all of which is new. In each case, we have computed a system of fundamental invariants and have 
determined some interesting features of the multiplicative invariant algebra such 
as its class group and, in some cases, a presentation of the algebra.

This paper is a summary of my dissertation which was written under 
the guidance of Martin Lorenz. The complete thesis is available 
electronically at \url{https://math.temple.edu/graduate/recentphds.html}. 
The reader may wish to refer to the thesis for additional details and background material.

\section{Preliminaries}
\label{S:prelim}

\subsection{Root Systems and Weyl Groups}

In this section, we will define root lattices and discuss what is known about 
multiplicative invariants of such lattices.  For background references on root systems 
see Bourbaki \cite{Bou} and Humphreys \cite{jH72}. We follow notation of Bourbaki throughout.

\subsubsection{Lattices Associated to a Root System}
\label{SS:lattices}

Let $\RPhi$ be a root system in Euclidean space $\E \cong \R^n$. 
For $v, w \in \E$, $w \neq 0$, put
\begin{equation*}
\langle v,w\rangle :=\frac{2 (v,w)}{(w,w)}
\end{equation*}
where $(\,.\,,\,.\,)$ is the inner product of $\E$. 

The \emph{root lattice} of $\RPhi$ is defined by
\[
L = L(\RPhi) \define \Z\RPhi  \subseteq \E 
\]
where $\Z\RPhi = \sum_{\alpha \in \RPhi} \Z\alpha$. 
If $\RDelta  = \{ \alpha_1,\dots,\alpha_n\}$ is a fixed base of $\RPhi$, then 
\[
L
= \bigoplus_{i=1}^n \Z\alpha_i \cong \Z^n
\]
The Weyl group $\We = \We(\RPhi)$ acts faithfully on $L$. 

The root lattice  $L$ is contained in 
the so-called \emph{weight lattice} of $\RPhi$, which is defined by
\[
\begin{split}
\RLambda = \RLambda(\RPhi) &\define 
\{ v \in \E \mid \langle v, \alpha \rangle \in \Z \text{ for all } \alpha \in \RPhi \} \\
&= \{ v \in \E \mid \langle v, \alpha \rangle \in \Z \text{ for all } \alpha \in \RDelta \}
\end{split}
\]
For the last equality above, see \cite[p.~67]{jH72}. Since $\We$ preserves the bracket
$\langle \,.\,,\,.\,\rangle$, it follows that
$\We$ stabilizes $\Lambda$ as well.
Under the $\R$-linear isomorphism
\begin{align*} 
\hspace{1.2in} 
&\ \E & &\longiso &  &\ \R^n\notag
\hspace{1.2in}		\\
&\upin &&& &\upin  \\ 
&\ v  & &\longmapsto & (\langle v&,\alpha_i \rangle)_1^n\notag
\end{align*}
the weight lattice $\RLambda \subseteq \E$ corresponds to $\Z^n \subseteq \R^n$.
The preimages $\varpi_i \in \RLambda$ of the standard $\Z$-basis vectors of 
$\e_i \in \Z^n$ are called the \emph{fundamental weights} with respect to $\RDelta$\,; 
they  form an $\R$-basis of $\E$\,. Thus
\begin{equation*}
\langle \varpi_i,\alpha_j \rangle = \delta_{i,j} \qquad \text{ and } \qquad
\RLambda = \bigoplus_{i=1}^n \Z\varpi_i \cong \Z^n
\end{equation*}

\subsubsection{Multiplicative Invariants: Reduction to Irreducible Root Systems}
\label{SS:irreducible}

Let $\RPhi$ be a root system and let $\We = \We(\RPhi)$ be the associated Weyl group. 
As was mentioned earlier, the multiplicative invariant algebra $\Z[\RLambda]^{\We}$
of the $\We$-action on the weight lattice $\RLambda = \RLambda(\RPhi)$ is a polynomial
algebra over $\Z$. A convenient set of variables is provided by the orbit sums
\[
\osum(\varpi_i) \define \sum_{w \in \We/\We_{\varpi_i}} \x^{w(\varpi_i)}
\]
of the fundamental weights $\varpi_i$\,. See
\cite[Th{\'e}or{\`e}me VI.3.1]{Bou} or \cite[Theorem 3.6.1]{Lo3} for a proof of this result.
Thus, in the following, we will concentrate on the multiplicative invariant algebra $\Z[L]^{\We}$ 
of the root lattice $L = L(\RPhi)$. Our goal in this section is to justify the claim made in the Introduction that
it suffices to consider the case of an irreducible root system $\RPhi$.

A root system $\RPhi$ is called \emph{irreducible} if it is not possible to write
$\RPhi$ as a disjoint union $\RPhi = \RPhi_1 \sqcup \RPhi_2$ with nonempty $\RPhi_1$
and $\RPhi_2$ that are elementwise orthogonal to each other. A general root system $\RPhi$
uniquely decomposes as a disjoint union $\RPhi = \bigsqcup_{i=1}^r \RPhi_i$ of irreducible
root systems $\RPhi_i$ that are elementwise orthogonal to each other; these are called the
irreducible components of $\RPhi$. The Weyl group
$\We = \We(\RPhi)$ is then the direct product of the Weyl groups $\We_i = \We(\RPhi_i)$, with
$\We_i$ acting trivially on all $\RPhi_j$ with $j \neq i$. See
\cite[Section VI.1.2]{Bou} for all this. It follows that $L = \bigoplus_{i=1}^r L_i$ with
$L_i = L(\RPhi_i) = \Z\RPhi_i$ and so $\Z[L] \cong \Z[L_1] \otimes_\Z \Z[L_2] \otimes \dots
\otimes_\Z \Z[L_r]$. This description of $\Z[L]$ and the above description of $\We$ easily
imply that
\[
\Z[L]^\We  \cong \Z[L_1]^{\We_1} \otimes_\Z \Z[L_2]^{\We_2} \otimes_\Z \dots
\otimes_\Z \Z[L_r]^{\We_r}
\]
Therefore, it suffices to describe the factors $\Z[L_i]^{\We_i}$, and so we may assume
that $\RPhi$ is irreducible.

\subsubsection{Multiplicative Invariants: Monoid Algebra Structure}
\label{SS:monoid}

Again, let $\RPhi$ be a root system and let $\We = \We(\RPhi)$ be the associated Weyl group. 
As was remarked in the previous section, the multiplicative invariant algebra $\Z[\RLambda]^{\We}$
of the weight lattice $\RLambda = \RLambda(\RPhi)$ is a polynomial
algebra over $\Z$, with the orbit sums
$\osum(\varpi_i) = \sum_{w \in \We/\We_{\varpi_i}} \x^{w(\varpi_i)}$
of the fundamental weights $\varpi_i$ acting as variables. 
Thus, putting
\[
\RLambda_+ = \bigoplus_{i=1}^n \Z_+\varpi_i \cong \Z_+^n
\]
we can view $\Z[\RLambda]^{\We}$ as the algebra of the monoid $\RLambda_+$:
\begin{equation}
\label{E:monoid}
\Z[\RLambda]^{\We} = \Z[\osum(\varpi_1),\dots,\osum(\varpi_n)] \cong \Z[\RLambda_+]
\end{equation}

The multiplicative invariant algebra $\Z[L]^{\We}$ of the root lattice $L = L(\RPhi)$ is generally not
quite as simple, but $\Z[L]^{\We}$ is at least still a monoid algebra. Specifically, recall that
$L \subseteq \RLambda$; so we may consider the submonoid $L \cap \RLambda_+$ of $L$\,.
The following result is
a special case of \cite[Proposition 6.2.1]{Lo3}.

\begin{thm}
\label{T:monoid}
Let  $L = L(\RPhi)$ be the root lattice of a root system $\RPhi$ and
let $\We = \We(\RPhi)$ be its Weyl group. The invariant algebra $\Z[L]^{\We}$
is isomorphic to the monoid algebra of $L \cap \RLambda_+$\,. On the basis
$L \cap \RLambda_+$ of the monoid algebra $\Z[L \cap \RLambda_+]$, the
isomorphism is explicitly given by
\begin{align*} 
\hspace{1.2in} 
\Omega \colon \Z[L &\cap \RLambda_+] & &\longiso &  \Z&[L]^{\We} 
\hspace{1.2in}		\\
&\upin &&& &\upin  \\ 
\sum_{i=1}^n &z_i \varpi_i  & &\longmapsto & \prod_{i=1}^n &\osum(\varpi_i)^{z_i} 
\end{align*}
\end{thm}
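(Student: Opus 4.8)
The plan is to realize $\Omega$ as the restriction of Bourbaki's isomorphism and then to control that restriction by means of a grading by the finite abelian group $\RLambda/L$. Write $\phi \colon \Z[\RLambda_+] \longiso \Z[\RLambda]^{\We}$ for the isomorphism of \eqref{E:monoid}; it carries the monoid generator $\varpi_i$ of the free commutative monoid $\RLambda_+$ to $\osum(\varpi_i)$ and hence the monoid element $\sum_i z_i\varpi_i$ to $\prod_i \osum(\varpi_i)^{z_i}$. Since $L \cap \RLambda_+$ is a submonoid of $\RLambda_+$, the monoid algebra $\Z[L \cap \RLambda_+]$ embeds into $\Z[\RLambda_+]$, and $\Omega$ is visibly the composite of this embedding with $\phi$. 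In particular $\Omega$ is injective, so the whole task reduces to identifying its image with $\Z[L]^{\We}$.

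First I would grade $\Z[\RLambda] = \bigoplus_{\lambda \in \RLambda}\Z\x^\lambda$ by the quotient group $\RLambda/L$, grouping the monomials $\x^\lambda$ according to the coset $\lambda + L$; the degree-zero component is then exactly $\Z[L]$. The one root-system fact that makes everything run is that $\We$ acts trivially on $\RLambda/L$: for a reflection $s$ associated with a root $\alpha$ one has $s(\lambda) - \lambda \in \Z\alpha \subseteq L$, and since $\We$ is generated by such reflections, $w(\lambda) \equiv \lambda \pmod{L}$ for all $w \in \We$ and $\lambda \in \RLambda$. Consequently $\We$ stabilizes each homogeneous component, so $\Z[\RLambda]^{\We}$ is a graded subalgebra, and its degree-zero component is $\Z[\RLambda]^{\We} \cap \Z[L] = \Z[L]^{\We}$ (here $\Z[L]$ is $\We$-stable, so its invariants are $\Z[L] \cap \Z[\RLambda]^{\We}$).

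Next I would transport this grading to $\Z[\RLambda_+]$ through $\phi$. Each orbit sum $\osum(\varpi_i)$ is homogeneous of degree $\varpi_i + L$, since every monomial $\x^{w(\varpi_i)}$ occurring in it satisfies $w(\varpi_i) \equiv \varpi_i \pmod{L}$ by the previous step. Grading $\Z[\RLambda_+]$ via the monoid map $\RLambda_+ \to \RLambda/L$, $\mu \mapsto \mu + L$, therefore makes $\phi$ a homomorphism of $\RLambda/L$-graded algebras (it sends homogeneous algebra generators to homogeneous elements of matching degree). The degree-zero component of $\Z[\RLambda_+]$ is spanned by those monoid elements $\mu \in \RLambda_+$ with $\mu \in L$, that is, it is precisely $\Z[L \cap \RLambda_+]$.

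Finally, because $\phi$ is a bijective degree-preserving map, it restricts to a bijection on each homogeneous component: if $x$ lies in the target's degree-$c$ part, then all homogeneous components of $\phi^{-1}(x)$ in degrees $\neq c$ must vanish, by uniqueness of the homogeneous decomposition together with the injectivity of $\phi$. Applying this in degree zero yields an isomorphism $\Z[L \cap \RLambda_+] \longiso \Z[L]^{\We}$, which on monoid basis elements is exactly $\Omega$. The crux of the whole argument—and the only place where root-system input is needed—is the triviality of the $\We$-action on $\RLambda/L$; once that is in hand, the grading does all the bookkeeping, and surjectivity onto $\Z[L]^{\We}$ comes for free, sidestepping any dominance-order triangularity estimate relating the product elements $\prod_i\osum(\varpi_i)^{z_i}$ to the orbit-sum basis of $\Z[L]^{\We}$.
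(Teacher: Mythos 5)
Your argument is correct, and it takes a genuinely different route from the one the paper relies on. The paper offers no in-text proof of Theorem~\ref{T:monoid}; it defers to \cite[Proposition 6.2.1]{Lo3}, whose underlying argument (as in Bourbaki) works with the orbit-sum $\Z$-basis $\{\osum(\lambda) \mid \lambda \in L \cap \RLambda_+\}$ of $\Z[L]^{\We}$ and a dominance-order triangularity estimate: $\prod_i \osum(\varpi_i)^{z_i}$ equals $\osum\bigl(\sum_i z_i\varpi_i\bigr)$ plus integer combinations of orbit sums $\osum(\nu)$ with $\nu$ dominant, strictly smaller, and congruent to $\sum_i z_i\varpi_i$ modulo the root lattice (hence again in $L$), so the products are unitriangularly related to the orbit-sum basis and form a basis themselves. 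You instead take Bourbaki's theorem \eqref{E:monoid} as a black box and cut it down by the $\RLambda/L$-grading: the key input, that $\We$ acts trivially on $\RLambda/L$ because $s_\alpha(\lambda)-\lambda \in \Z\alpha \subseteq L$ and reflections generate $\We$, is correctly identified and proved, and the remaining steps --- that $\Z[\RLambda]^{\We}$ is a graded subalgebra with degree-zero part $\Z[L]^{\We}$, that $\phi$ is graded because $\osum(\varpi_i)$ is homogeneous of degree $\varpi_i + L$, and that a bijective graded homomorphism restricts to an isomorphism on each component --- all check out. What each approach buys: yours is shorter, makes surjectivity automatic, and in fact yields the full generality of Lorenz's proposition (any $\We$-stable lattice between the root and weight lattices), whereas the triangularity route additionally produces the orbit-sum basis of $\Z[L]^{\We}$ and the unitriangular change of basis, information the paper exploits elsewhere (e.g., when rewriting the fundamental invariants of $\Z[A_2]^{\Sy_3}$ as orbit sums). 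It is also worth noting that your grading argument is exactly the abstract form of the degree-component computations the paper carries out concretely, extracting $\Z[A_{n-1}]^{\Sy_n}$ as the degree-zero part of $\Z[B_n]^{\Sy_n}$ via \eqref{E:AnSeq} and $\Z[C_n]^{\We}$ as the even-degree part of $\Z[B_n]^{\We}$ via \eqref{E:CnSeq}, so your proof unifies those ad hoc steps under the general $\RLambda/L$-grading.
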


It remains to describe the structure of the monoid $L \cap \RLambda_+$ for a given 
irreducible root system $\RPhi$.
A method for determining a system of fundamental invariants and a Hironaka decomposition of the
invariant algebra $\Z[L]^{\We} \cong \Z[L \cap \RLambda_+]$ will be described in 
Section~\ref{SS:Hilbert} below.

\subsubsection{Hilbert Bases of Monoids}
\label{SS:Hilbert}

We will only be concerned with finitely generated submonoids of lattices; these
will be
referred to as \emph{affine monoids} in the following. 
An affine monoid $M$ is called \emph{positive} if $M$ has no units (that is, invertible elements)
other than $0$\,. In this case, an element $0 \neq m \in M $ is called \emph{indecomposable} if
$m = a + b$ $(a,b \in M)$ implies $a = 0$ or $b = 0$. 
It is a standard fact that the collection of all indecomposable elements
of any positive affine monoid $M$ is finite and forms
the unique smallest generating set of $M$; it is called the \emph{Hilbert basis} of $M$.

Now let us focus specifically on the monoid 
\[
M = L \cap \RLambda_+
\]
that was considered in Theorem~\ref{T:monoid}. This monoid is positive affine. 
A finite Hilbert basis for  $M = L \cap \RLambda_+$ can be constructed
following the procedure from \cite[6.3.5]{Lo3}, which is based on the usual proof of 
Gordan's Lemma; see, e.g., \cite[6.1.2]{BH}. 
Consider the weight lattice $\RLambda = \bigoplus_{i=1}^n \Z \varpi_i$ as before.
Since $\RLambda/L$ is finite, we may define $z_i \in \Z_{>0}$ 
to be the order of $\varpi_i$ modulo $L$ and write
\begin{equation}
\label{E:mi}
m_i = z_i \varpi_i \in L \qquad (i = 1,\ldots,n)
\end{equation}
Since the $\varpi_i$
form an $\R$-basis of Euclidean space $\E$, we have $\E = \bigoplus_{i=1}^n \R m_i$
and $L \cap \bigoplus_{i=1}^n \R_+ m_i = L \cap \bigoplus_{i=1}^n \R_+ \varpi_i
= L \cap \RLambda_+ = M$\,. Put 
\[
K =\Bigl\{ \sum_{i=1}^n t_i m_i \in \E \mid 0\le t_i\le 1\Bigr\}
\supset K^\circ = \Bigl\{  \sum_{i=1}^n r_i m_i \in \E \mid 0\le r_i < 1 \Bigr\}
\ .
\]
Then $K \cap L \subseteq M$ and $K \cap L$ is finite, being the
intersection of a compact and a discrete subset of $\E$. We claim
that $K \cap L$ generates the monoid $M$. To see this, note that
each $m \in \bigoplus_{i=1}^n \R_+ m_i$ can be uniquely written as
$m = m' + m''$
with $m' \in \bigoplus_{i=1}^n \Z_+ m_i \subseteq L$ and $m'' \in K^\circ$.
If $m \in M$,  then the summand $m''$ belongs
to $K^\circ \cap L$\,. Since $m_1,\ldots,m_n$ also belong to $K \cap L$, the
expression $m = m' + m''$
exhibits $m$ as an element of the monoid generated by $K \cap L$, which proves
our claim. Note that $m_1,\ldots,m_n$ are indecomposable elements
of $M = L \cap \RLambda_+$\,. The preceding argument shows that all other
indecomposable elements of $M$ belong to $K^\circ \cap L$. Denoting these
additional indecomposables of $M$ 
(if any) by $m_{n+1},\ldots,m_s$ we obtain the desired Hilbert
basis $\{m_1,\ldots,m_s\}$ for $M = L \cap \RLambda_+$\,.

We list the result here in the following proposition.

\begin{prop} 
\label{P:Hilbert}
Assume the notation of Theorem~\ref{T:monoid}.
\begin{enumerate}
\item
The Hilbert basis of the monoid 
$M = L \cap \RLambda_+$ is given by the elements
$m_i$ $(i=1,\dots,n)$ defined in \eqref{E:mi} together
with the indecomposable elements of $M$ that belong to
the finite subset $K^\circ \cap L$ of $M$\,.
\item
The monoid $M$ decomposes as $M = \bigsqcup_{m \in K^\circ \cap L} m + M_0$
with $M_0 = \bigoplus_{i=1}^n \Z_+ m_i$\,.
\item
Primary invariants for the invariant algebra $\Z[L]^{\We}$ are given by the elements
\[
\mu_i \define \Omega(m_i) = \osum(\varpi_i)^{z_i} \qquad(i=1,\dots,n)
\]
where $z_i$ is the order of $\varpi_i$ modulo $L$.
A Hironaka decomposition of $\Z[L]^{\We}$ is given by
\[
\Z[L]^{\We} = \bigoplus_{m \in K^\circ \cap L} \Omega(m) \Z[\mu_1,\dots,\mu_n]
\]
\end{enumerate} 
\end{prop}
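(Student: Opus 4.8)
The plan is to recognize that parts (a) and (b) essentially record the construction carried out in the paragraph preceding the statement, while part (c) is obtained by transporting (b) across the monoid-algebra isomorphism $\Omega$ of Theorem~\ref{T:monoid}. The one claim in that paragraph left unverified is that each $m_i = z_i\varpi_i$ is indecomposable in $M = L \cap \RLambda_+$, and this is what I would establish first. Suppose $m_i = a + b$ with $a, b \in M$. Since $a, b \in \RLambda_+ = \bigoplus_j \Z_+\varpi_j$, write $a = \sum_j a_j\varpi_j$ and $b = \sum_j b_j\varpi_j$ with $a_j, b_j \in \Z_+$. Comparing coefficients in $m_i = a + b$ forces $a_j = b_j = 0$ for $j \neq i$ and $a_i + b_i = z_i$. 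As $a = a_i\varpi_i$ lies in $L$ and $z_i$ is the order of $\varpi_i$ modulo $L$, we get $z_i \mid a_i$; together with $0 \le a_i \le z_i$ this gives $a_i \in \{0, z_i\}$, i.e. $a = 0$ or $b = 0$. Combined with the facts already in hand before the statement --- that $K \cap L$ generates $M$ and that every indecomposable element of $M$ besides $m_1, \dots, m_n$ lies in $K^\circ \cap L$ --- this yields (a).

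For (b) I would reuse the unique decomposition $m = m' + m''$ (with $m' \in M_0 = \bigoplus_i \Z_+ m_i$ and $m'' \in K^\circ$) that drove the generation argument. For $m \in M$ one has $m'' = m - m' \in L$, so $m'' \in K^\circ \cap L$ and $m \in m'' + M_0$; thus $M = \bigcup_{p \in K^\circ \cap L}(p + M_0)$, the coset of $0 \in K^\circ \cap L$ being $M_0$ itself. Disjointness is immediate from uniqueness: if $p + m_0 = q + m_0'$ with $p, q \in K^\circ \cap L$ and $m_0, m_0' \in M_0$, then both sides are decompositions of the same element into an $M_0$-part and a $K^\circ$-part, forcing $p = q$. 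This gives the claimed $M = \bigsqcup_{p \in K^\circ \cap L}(p + M_0)$.

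Finally, for (c) I would pass to monoid algebras and apply $\Omega$. Since the $m_i$ are $\R$-linearly independent, $M_0$ is a free commutative monoid of rank $n$, so $\Z[M_0]$ is a polynomial algebra on the basis elements $\x^{m_i}$; as $\Omega(\x^{m_i}) = \mu_i$, the isomorphism $\Omega$ carries $\Z[M_0]$ onto $\Z[\mu_1, \dots, \mu_n]$ and, being injective, keeps the $\mu_i$ algebraically independent. The coset decomposition of (b) translates at the level of $\Z[M]$ into the free $\Z[M_0]$-module splitting $\Z[M] = \bigoplus_{p \in K^\circ \cap L} \x^p\,\Z[M_0]$ (using $\x^{p + m_0} = \x^p\x^{m_0}$), and applying $\Omega$ term by term yields $\Z[L]^{\We} = \bigoplus_{p \in K^\circ \cap L} \Omega(p)\,\Z[\mu_1, \dots, \mu_n]$, which is the asserted Hironaka decomposition and in particular exhibits the $\mu_i$ as primary invariants. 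The only step requiring genuine argument rather than bookkeeping is the indecomposability of the $m_i$ in the first paragraph; the remainder is a faithful transcription of the preceding construction together with a routine transport of structure through $\Omega$.
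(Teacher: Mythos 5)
Your proposal is correct and follows essentially the same route as the paper, whose proof of Proposition~\ref{P:Hilbert} is precisely the construction in the preceding paragraph (the $m=m'+m''$ decomposition and the $K^\circ\cap L$ argument), with part (c) obtained by transport through $\Omega$. Your coefficient-comparison argument for the indecomposability of the $m_i$ in $\RLambda_+=\bigoplus_j\Z_+\varpi_j$, together with the explicit disjointness check via uniqueness of $m=m'+m''$, correctly fills in the two details the paper leaves as unproved remarks.
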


\subsection{Class Groups}
\label{SS:classgroups}

The class group $\Cl(R)$ can be defined
for an arbitrary Krull domain $R$; it measures the ``unique factorization defect'' of $R$: the
class group $\Cl(R)$ is trivial precisely if $R$ is a UFD. For the detailed definition of Krull domains and
class groups, we refer to Fossum \cite{rF73} or Bourbaki \cite{nB65}.
For our purposes, it will suffice to remark that, for commutative noetherian domains, being
a Krull domain is the same as being integrally closed. This includes all multiplicative invariant algebras
$\kk[L]^G$ over any commutative integrally closed noetherian domain $\kk$.

The class group of multiplicative invariants has been determined, for
arbitrary multiplicative invariant algebras, in \cite[Theorem 4.1.1]{Lo3}. In this section, we will describe the
class group in the special case of a Weyl group acting on a root lattice.

In order to state the result, we briefly recall some general facts about reflections on arbitrary 
lattices $L \cong \Z^n$. An automorphism $s \in \GL(L) \cong \GL_n(\Z)$ is called a \emph{reflection} 
if the endomorphism $1-s \in \End(L) \cong \Mat_n(\Z)$
has rank $1$. It is not hard to show that, in this case, $s$ must be conjugate in $\GL(L)$ to 
exactly one of the following two matrices:
\[
\left( \begin{smallmatrix}
-1\\
&1&\phantom{1}\\
&&&\ddots& \\ 
&&&&&1
\end{smallmatrix}  \right)
\qquad \text{or} \qquad
\left( \begin{smallmatrix}
0&1\\
1&0\\
&&1&\phantom{1}\\
&&&&\ddots&\\ 
&&&&&&1
\end{smallmatrix} \right)
\]
In the former case, $s$ is called a \emph{diagonalizable reflection}; this case is
characterized by the isomorphism $H^1(\langle s \rangle,L)  \cong \Z/2\Z$, while in the 
non-diagonalizable case, we have $H^1(\langle s \rangle,L)  =0$\,.
See \cite[Section 1.7.1]{Lo3} for details on the foregoing as well as for a proof of the 
next lemma.

\begin{lem}
\label{L:diag}
Let $\cG$ be a finite subgroup of $\GL(L)$ for some lattice $L$ and let $\cD$ denote the subgroup
of $\cG$ that is generated by the diagonalizable reflections in $\cG$\,. Then $\cD$ is a normal
subgroup of $\cG$ that is an elementary abelian $2$-group of rank $r$, where $r$ is the
number of diagonalizable reflections in $\cG$\,.
\end{lem}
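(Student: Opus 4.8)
The plan is to verify the three assertions of the lemma --- normality, the elementary abelian $2$-group structure, and the rank count --- in that order, the last being the heart of the matter. For normality, I would first note that being a diagonalizable reflection is invariant under conjugation in $\GL(L)$: for $g \in \GL(L)$ and a reflection $s$, the endomorphism $1 - gsg^{-1} = g(1-s)g^{-1}$ again has rank $1$, and $gsg^{-1}$ lies in the same $\GL(L)$-conjugacy class as $s$, so it is diagonalizable exactly when $s$ is (equivalently $H^1(\langle gsg^{-1}\rangle, L) \cong H^1(\langle s\rangle, L)$). Hence conjugation by any $g \in \cG$ permutes the diagonalizable reflections of $\cG$, stabilizing the generating set of $\cD$, so $\cD \trianglelefteq \cG$.

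To obtain the elementary abelian $2$-group structure I would reduce modulo $2$. Since $-1 \equiv 1 \pmod 2$ and each diagonalizable reflection $s$ is $\GL_n(\Z)$-conjugate to $\mathrm{diag}(-1,1,\dots,1)$, we get $s \equiv \mathbf{1} \pmod 2$; thus $\cD \subseteq N := \Ker\bigl(\cG \to \GL(L/2L)\bigr)$. It therefore suffices to show that $N$ has exponent at most $2$, since a group of exponent $2$ is automatically abelian. Writing $g = \mathbf{1} + 2M \in N$ with $M \in \Mat_n(\Z)$, one finds $g^2 = \mathbf{1} + 4(M + M^2) \in \Ker\bigl(\GL(L) \to \GL(L/4L)\bigr)$, a subgroup that is torsion-free by Minkowski's theorem (principal congruence subgroups of modulus $\ge 3$ are torsion-free). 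As $g^2$ has finite order inside the finite group $\cG$, it must be trivial, so $N$ --- and hence its subgroup $\cD$ --- is an elementary abelian $2$-group.

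The crux is the rank count, which I would phrase as the statement that the $r$ diagonalizable reflections of $\cG$ form an $\mathbb{F}_2$-basis of $\cD$; as they generate $\cD$, it is enough to prove their $\mathbb{F}_2$-linear independence. Viewing $\cD$ as an $\mathbb{F}_2$-vector space, I would decompose $V = L \otimes_\Z \Q = \bigoplus_{\chi} V_\chi$ into simultaneous eigenspaces of the commuting involutions, where $\chi$ ranges over those characters $\cD \to \{\pm 1\}$ with $V_\chi \ne 0$; call this set $S$. Faithfulness forces $S$ to span $\Hom(\cD,\{\pm1\})$. Because each diagonalizable reflection $s$ has a one-dimensional $(-1)$-eigenspace on $V$, there is a unique $\chi_s \in S$ with $\chi_s(s) = -1$, and $V_{\chi_s} = \Q v_s$ is exactly the line negated by $s$. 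The assignment $s \mapsto \chi_s$ is then injective: if $\chi_s = \chi_{s'}$ then $\Q v_s = \Q v_{s'}$, and writing the two reflections over $\Q$ as $x \mapsto x - 2f(x)v_s$ and $x \mapsto x - 2f'(x)v_s$ with $f(v_s) = f'(v_s) = 1$, one computes $(ss')^k(x) = x + 2k(f'-f)(x)\,v_s$; this has infinite order unless $f = f'$, so finiteness of $\cG$ forces $s = s'$. Injectivity in turn gives $\chi_{s_0}(s) = 1$ for every diagonalizable reflection $s \ne s_0$, because $\chi_{s_0} \in S$ differs from the unique character $\chi_s$ that is nontrivial on $s$. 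Consequently a putative dependence $\prod_{s \in I} s = \mathbf{1}$ with $\emptyset \ne I$ and any chosen $s_0 \in I$ would yield $-1 = \chi_{s_0}(s_0)\prod_{s \in I \setminus \{s_0\}} \chi_{s_0}(s) = 1$, a contradiction; hence the reflections are independent and $\dim_{\mathbb{F}_2} \cD = r$.

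I expect the rank computation to be the main obstacle. The normality and the exponent-$2$ (hence commutativity) properties come out cleanly from conjugacy invariance and from reduction modulo $2$ combined with Minkowski's theorem, but identifying the rank with the exact number $r$ of diagonalizable reflections requires the eigenspace analysis above, and in particular an essential appeal to the finiteness of $\cG$ --- via the transvection $(ss')^k(x) = x + 2k(f'-f)(x)v_s$ of infinite order --- to ensure that distinct diagonalizable reflections negate distinct lines and so yield distinct characters.
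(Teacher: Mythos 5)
Your proposal is correct, but there is no in-text proof in the paper to compare against: the paper quotes this lemma from [Lo3, Section 1.7.1] without proof, so your argument stands on its own. All three steps check out. Normality is right: $1-gsg^{-1}=g(1-s)g^{-1}$ has rank $1$ and diagonalizability is a conjugation invariant (the two conjugacy classes of reflections are distinguished by $H^1(\langle s\rangle,L)$), so $\cG$ permutes the generating set of $\cD$. Your commutativity step is a clean self-contained device: every diagonalizable reflection is congruent to $1$ modulo $2$, and the level-$4$ congruence kernel of $\GL_n(\Z)$ is torsion-free by Minkowski, so the kernel $N$ of reduction mod $2$ in $\cG$ has exponent $2$; this in fact proves more than needed (all of $N$, not just $\cD$, is elementary abelian). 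The rank count, which is the delicate part, is also sound: over $\Q$ the commuting involutions in $\cD$ are simultaneously diagonalizable, each diagonalizable reflection $s$ has a one-dimensional $(-1)$-eigenspace, hence a unique character $\chi_s \in S$ with $\chi_s(s)=-1$, and your transvection computation $(ss')^k(x)=x+2k(f'-f)(x)v_s$ is valid precisely because $(f'-f)(v_s)=0$, so finiteness of $\cG$ forces distinct reflections to negate distinct lines; then $\chi_{s_0}(s)=1$ for $s\neq s_0$, and applying $\chi_{s_0}$ to a putative relation $\prod_{s\in I}s=\mathbf{1}$ gives the contradiction $-1=1$. One small economy you could have used: once $\cD$ is known to be abelian, $ss'$ has order at most $2$ and $(ss')^2(x)=x+4(f'-f)(x)v_s=x$ already forces $f=f'$, so the infinite-order observation is more than you need --- though your version establishes the stronger fact that two distinct reflections sharing a reflecting line generate an infinite group, with no commutativity hypothesis. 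The eigenline-separation strategy is essentially the standard one and in the spirit of the cited source; the Minkowski congruence-subgroup route to commutativity is your genuinely independent ingredient and makes the whole write-up self-contained.
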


With this, we may now describe the class group of the 
multiplicative invariant algebra of a root lattice.

\begin{thm}
\label{T:cl}
Let  $L = L(\RPhi)$ be the root lattice of a root system $\RPhi$,
let $\We = \We(\RPhi)$ be its Weyl group, and let $\cD$ denote the subgroup of $\We$ 
that is generated by the diagonalizable
reflections. Then:
\begin{enumerate}
\item
$\Cl(\Z[L]^{\We}) \cong  H^1(\We/\cD, L^\cD)$\,. 
\item
If 
$\cD=\{1\}$ then $\Cl(\Z[L]^{\We}) \cong  \Lambda/L$, where $\Lambda$ is the
weight lattice of $\RPhi$\,.
\item
If $\RPhi$ is irreducible and $\cD \neq \{ 1\}$ then $\Cl(\Z[L]^{\We}) = 0$\,.
\end{enumerate} 
\end{thm}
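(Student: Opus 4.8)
The plan is to deduce all three parts from the general class group formula \cite[Theorem 4.1.1]{Lo3}, which for a finite group $G \subseteq \GL(L)$ computes $\Cl(\Z[L]^G) \cong H^1(G/N, L^N)$, where $N$ is the normal subgroup generated by the diagonalizable reflections in $G$. Applying this with $G = \We$ gives part (a) at once, since by Lemma~\ref{L:diag} the relevant subgroup is exactly $\cD$. This also sets up (b) and (c), which I will obtain by analyzing the two ingredients $L^{\cD}$ and $\We/\cD$ that enter the formula.

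For part (b) the hypothesis $\cD = \{1\}$ reduces (a) to $\Cl(\Z[L]^{\We}) \cong H^1(\We, L)$, so it remains to show $H^1(\We, L) \cong \RLambda/L$. The key idea is to linearize by tensoring up to $\E = \R \otimes_\Z L$ with its given $\We$-action, using the short exact sequence of $\We$-modules $0 \to L \to \E \to \E/L \to 0$. Since $\E$ is a $\Q$-vector space, finite group cohomology kills it in positive degrees, so $H^1(\We, \E) = 0$; and since $\RPhi$ spans $\E$, we have $\E^{\We} = 0$. The long exact sequence therefore collapses to an isomorphism $H^1(\We, L) \cong (\E/L)^{\We}$. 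It then suffices to identify this fixed group: one has $w v \equiv v \pmod{L}$ for all $w \in \We$ precisely when $s_{\alpha_i} v - v = -\langle v, \alpha_i \rangle \alpha_i \in L$ for every simple root $\alpha_i$, which, as the $\alpha_i$ are primitive in $L$, holds exactly when $\langle v, \alpha_i \rangle \in \Z$ for all $i$, i.e. when $v \in \RLambda$. Hence $(\E/L)^{\We} = \RLambda/L$, and (b) follows.

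For part (c), (a) reduces the claim to $H^1(\We/\cD, L^{\cD}) = 0$, and I will prove the stronger statement $L^{\cD} = 0$. Let $\RPhi_{\mathrm{diag}} \subseteq \RPhi$ be the set of roots $\alpha$ for which $s_\alpha$ is a diagonalizable reflection on $L$, so that $\cD = \langle s_\alpha : \alpha \in \RPhi_{\mathrm{diag}} \rangle$. This set is stable under $\We$, because conjugation sends $s_\alpha$ to $s_{w\alpha}$ and diagonalizability is a conjugacy invariant. Since $\cD \neq \{1\}$, the set $\RPhi_{\mathrm{diag}}$ is nonempty, so its $\R$-span is a nonzero $\We$-stable subspace of $\E$; as $\RPhi$ is irreducible, $\We$ acts irreducibly on $\E$, forcing this span to be all of $\E$. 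Each $s_\alpha$ fixes the hyperplane $\alpha^{\perp}$, so $\E^{\cD} = \bigcap_{\alpha \in \RPhi_{\mathrm{diag}}} \alpha^{\perp} = 0$, whence $L^{\cD} = L \cap \E^{\cD} = 0$ and $\Cl(\Z[L]^{\We}) \cong H^1(\We/\cD, 0) = 0$.

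The only genuinely external input is the general formula (a); granting it, the arguments are short. I expect the subtlest point of (b) to be the passage through $\E$: one must invoke the vanishing of finite group cohomology with $\Q$-vector space coefficients and the fact that $\E^{\We} = 0$, after which the computation of $(\E/L)^{\We}$ reproduces precisely the defining condition of the weight lattice. In (c) the crux is the irreducibility hypothesis, which enters solely to guarantee that the $\We$-stable set of diagonalizable roots spans $\E$; without it, for a reducible system, $L^{\cD}$ can be nonzero and the class group need not vanish.
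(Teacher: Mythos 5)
Your proof is correct, but it cannot be said to follow the paper's route, because the paper gives no proof of Theorem~\ref{T:cl} at all: part (a) is quoted directly as a specialization of \cite[Theorem 4.1.1]{Lo3} (with Lemma~\ref{L:diag} likewise cited from \cite[Section 1.7.1]{Lo3}), and parts (b) and (c) are stated as consequences whose verification is left to the references. You supply honest derivations of (b) and (c) from the single external input (a). For (b), your linearization via $0 \to L \to \E \to \E/L \to 0$, together with $H^1(\We,\E)=0$ (finite group, uniquely divisible coefficients) and $\E^{\We}=0$ (the roots span $\E$ by definition of a root system), gives $H^1(\We,L)\cong(\E/L)^{\We}$, and your identification of $(\E/L)^{\We}$ with $\RLambda/L$ reproduces exactly the defining condition of the weight lattice; the only steps you compress are (i) that $\{w\in\We \mid wv\equiv v \bmod L\}$ is a subgroup of $\We$ (using that $\We$ stabilizes $L$), so checking the simple reflections suffices, and (ii) $t\alpha_i\in L \Leftrightarrow t\in\Z$, which is immediate since the simple roots form a $\Z$-basis of $L$. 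For (c), your claim $L^{\cD}=0$ is genuinely stronger than the needed vanishing of $H^1(\We/\cD,L^{\cD})$, and the argument is sound, though you should make explicit the fact that legitimizes defining $\RPhi_{\mathrm{diag}}$: a lattice reflection $s\in\We$ (i.e.\ $\operatorname{rank}(1-s)=1$) is a finite-order orthogonal transformation of $\E$ fixing a hyperplane pointwise, hence is the orthogonal reflection in that hyperplane and so equals $s_\alpha$ for some $\alpha\in\RPhi$; then $ws_\alpha w^{-1}=s_{w\alpha}$ makes $\RPhi_{\mathrm{diag}}$ a nonempty $\We$-stable set, its span is a nonzero $\We$-stable subspace, irreducibility forces the span to be $\E$, and $\E^{\cD}=(\operatorname{span}\RPhi_{\mathrm{diag}})^{\perp}=0$. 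What your route buys is a self-contained proof of the whole theorem modulo the one citation; what the paper's route buys is brevity, delegating these specializations (e.g.\ $H^1(\We,L)\cong\RLambda/L$, which is also covered by \cite[Lemma 2.8.2]{Lo3} in the cases the paper uses) to \cite{Lo3} once and for all.
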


\subsection{Veronese Algebras}
\label{S:VA}

In this section, we collect some general ring theoretic facts on Veronese algebras. 
Much of this material is well-known or ``folklore'', at least for algebras over a field.
However, I am not aware of a reference for parts (c) and (d) of Proposition~\ref{P:Veronese} below.
Since we are working over the integers $\Z$ in this paper, full proofs
for an arbitrary commutative base ring $\kk$ are given. In fact, for our work on
multiplicative invariants of root lattices, we will only need to consider some very special Veronese
algebras as in Corollary~\ref{L:Veronese} below. However, we hope that the more general 
Proposition~\ref{P:Veronese} may be useful elsewhere.

Let $R = R^0 \oplus R^1 \oplus R^2 \oplus \dots$ be an
arbitrary graded $\kk$-algebra, not necessarily commutative.
Thus, all $R^i$ are $\kk$-submodules of $R$ and $R^iR^j \subseteq R^{i+j}$ holds for all $i$ and 
$j$. We will mostly be interested in the
case where the algebra $R$ is generated by $R^1$ and satisfies $R^0 = \kk$.
In this case, we will say that $R$ is \emph{$1$-generated}, for short.
Let $\Tens(R^1)$ denote the tensor algebra of
the $\kk$-module $R^1$; this is a graded $\kk$-algebra with $i^{\text{th}}$ homogeneous
component 
\[
(R^1)^{\otimes i} = \underbrace{R^1 \otimes R^1 \otimes
\dots \otimes R^1}_{i \text{ factors}} 
\]
where $\otimes = \otimes_\kk$ and $(R^1)^{\otimes 0} = \kk$.
The algebra $R$ is $1$-generated iff the canonical map $\Tens(R^1) \to R$, 
given by the embedding $R^1 \into R$,
is surjective. We let $I_R$ denote the kernel of this map. 
The ideal $I_R$ is called the \emph{relation ideal} 
and any collection
generators of $I_R$ is called a \emph{set of defining relations} for the algebra $R$.
Thus, we have an exact sequence
\begin{equation*}
0 \tto I_R \tto \Tens(R^1) \tto R \tto 0
\end{equation*}
Since the epimorphism $\Tens(R^1) \onto R$ maps $(R^1)^{\otimes i}$
to $R^i$,
we have $I_R = \bigoplus_{i\ge 0} I^i_R$ with $I^i_R = I_R \cap R^i$. Thus, we may always
choose \emph{homogeneous} defining relations for $R$. Thus, the above short exact
sequence amounts to short exact sequences
\begin{equation}
\label{E:reln}
0 \tto I_R^i \tto (R^1)^{\otimes i} \tto R^i \tto 0
\end{equation}
for each $i \ge 0$.
It is easy to see that $R$ has no defining relations of degree $i$ 
if and only if the following map, given by multiplication, is surjective:
\begin{equation}
\label{E:degi}
(R^1\otimes I_R^{i-1}) \oplus (I_R^{i-1} \otimes R^1) \tto I_R^i
\end{equation}

For a given positive integer $c$, the $c^{\text{th}}$ \emph{Veronese subalgebra} of $R$
is defined by 
\[
R^{(c)} = \bigoplus_{i \ge 0} R^{ci}
\]
We will view the Veronese subalgebra 
$R^{(c)}$ as a graded algebra in its own right,  with $i^{\text{th}}$ homogeneous
component 
\[
(R^{(c)})^i = R^{ci}
\]

\begin{prop}
\label{P:Veronese}
Let $R = \bigoplus_{i \ge 0} R^{i}$ be a graded $\kk$-algebra, where
$\kk$ is some commutative ring, and let
$R^{(c)} = \bigoplus_{i \ge 0} R^{ci}$ be the the $c^{\text{th}}$ Veronese subalgebra of $R$. Then:
\begin{enumerate}
\item
If $R$ is $1$-generated, then $R^{(c)}$ is also $1$-generated.
\item
Let $R$ be $1$-generated. If $R$ has no defining relations of degree $> (i-1)c + 1$,
then $R^{(c)}$ has no defining relations of degree $> i$. 
\item
If $R$ is a (commutative) Cohen-Macaulay ring, then so is $R^{(c)}$.
\item
If $R$ is a (commutative) Krull domain, then so is $R^{(c)}$. The embedding
$R^{(c)} \into R$ gives rise to a homomorphism of class groups $\Cl(R^{(c)}) \to \Cl(R)$.
\end{enumerate}
\end{prop}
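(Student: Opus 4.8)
\section*{Proof proposal for Proposition~\ref{P:Veronese}}

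The plan is to treat the four assertions in turn, handling (a) and (b) through the tensor-algebra machinery of \eqref{E:reln}--\eqref{E:degi} and reducing (c) and (d) to structural facts about $R$ as a module and as a ring extension over $R^{(c)}$. For part (a), I would first record that $(R^{(c)})^0 = R^0 = \kk$ and $(R^{(c)})^1 = R^c$. Since $R$ is $1$-generated, the $i$-fold product satisfies $R^{ci} = (R^1)^{ci}$; grouping the $ci$ tensor factors from $R^1$ into $i$ consecutive blocks of $c$ gives $R^{ci} = (R^c)^i = \bigl((R^{(c)})^1\bigr)^i$, which is precisely the statement that $R^{(c)}$ is generated by its degree-one component. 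For part (b), write $T = \Tens(R^1)$, so that $R = T/I_R$ with $I_R^k = I_R \cap T^k$. The surjection $T^c \onto R^c$ induces canonical surjections $\Tens(R^c) \onto T^{(c)} \onto R^{(c)}$, under which the degree-one part is $R^c$ and a defining relation of $R^{(c)}$ in Veronese-degree $j$ is represented by an element of $I_R^{cj} \subseteq T^{cj}$. Using the surjectivity criterion \eqref{E:degi} for $R^{(c)}$, the claim that $R^{(c)}$ has no relation of Veronese-degree $j$ reduces to the identity $I_R^{cj} = T^c\,I_R^{c(j-1)} + I_R^{c(j-1)}\,T^c$. The hypothesis says $I_R$ is generated as a two-sided ideal of $T$ in ordinary degrees $\le (i-1)c+1$, so a generating term of $I_R^{cj}$ has the form $u\,g\,v$ with $g$ a relation of degree $d \le (i-1)c+1$ and $\deg u + \deg v = cj - d \ge 2c-1$ once $j > i$; hence $\max(\deg u,\deg v)\ge c$, and splitting off a block of $c$ tensor factors on that side exhibits $u\,g\,v$ as a product of a degree-one element from $R^c$ with a relation of Veronese-degree $j-1$. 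This degree bookkeeping is the crux of (b), and the bound $(i-1)c+1$ is exactly what forces the surplus $2c-1$ at the boundary $j=i+1$.

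For part (c), the key structural observation is that $R = \bigoplus_{k=0}^{c-1} R^{(c,k)}$ with $R^{(c,k)} = \bigoplus_{\ell\ge 0} R^{c\ell+k}$, making $R$ a $\Z/c$-graded ring whose degree-zero component is $R^{(c)}$. The projection $R \onto R^{(c)}$ onto the degree-zero piece is a retraction of $R^{(c)}$-modules, so $R^{(c)}$ is a direct summand of $R$ as an $R^{(c)}$-module. Moreover $R$ is integral over $R^{(c)}$, since every homogeneous $r \in R^k$ satisfies $x^c - r^c = 0$ with $r^c \in R^{(c)}$ and the homogeneous elements generate $R$ as a ring; together with finite generation over $\kk$ (implicit in the Cohen--Macaulay hypothesis) this makes $R$ module-finite over $R^{(c)}$. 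I would then invoke the Hochster--Eagon principle: a direct summand $A$ of a Cohen--Macaulay ring $B$ that is module-finite over $A$ is itself Cohen--Macaulay. Applied to $R^{(c)} \into R$ this yields (c).

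For part (d), I would prove Krull-ness by the intersection-with-a-subfield principle. Regarding $R$ as $\Z/c$-graded with $R^{(c)} = R_{\bar 0}$, a short homogeneity argument shows $R^{(c)} = R \cap \operatorname{Frac}(R^{(c)})$ inside $\operatorname{Frac}(R)$: if $x = a/b$ with $a,b \in R^{(c)}$, $b\neq 0$, and $x \in R$, then comparing $\Z/c$-degrees in $bx = a$ forces every homogeneous component of $x$ of nonzero $\Z/c$-degree to vanish, so $x \in R_{\bar 0}$. Since the intersection of a Krull domain with a subfield of its fraction field is again a Krull domain, $R^{(c)}$ is Krull. For the class-group homomorphism I would use the integrality of $R$ over $R^{(c)}$ established above: as $R^{(c)}$ is normal and $R$ is a domain integral over it, the going-down theorem holds, so every height-one prime of $R$ contracts to a prime of height $\le 1$ in $R^{(c)}$, i.e.\ the extension $R^{(c)} \into R$ satisfies condition PDE. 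The functoriality of divisor class groups under PDE extensions (Fossum) then produces the desired map $\Cl(R^{(c)}) \to \Cl(R)$.

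The two delicate points I anticipate are the degree bookkeeping in (b), which must be arranged so that the surplus of free tensor slots lands cleanly on one side, and, more substantively, the Cohen--Macaulay descent in (c): the Hochster--Eagon argument and the module-finiteness it requires must be phrased so as to remain valid over an arbitrary commutative base ring $\kk$ rather than a field, and this general-base version of (c) is where I expect the main work to lie.
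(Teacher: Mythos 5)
Your proposal is correct and follows essentially the same route as the paper in all four parts: blocking the $ci$ tensor factors into $i$ blocks of $c$ for (a), reducing defining relations of $R^{(c)}$ in Veronese-degree $j$ to the statement $I_R^{cj} = T^c I_R^{c(j-1)} + I_R^{c(j-1)} T^c$ and exploiting the degree surplus $2c-1$ at $j = i+1$ for (b), the Reynolds-operator retraction together with integrality and Hochster--Eagon for (c), and establishing $R^{(c)} = R \cap \operatorname{Frac}(R^{(c)})$ plus Fossum's criteria for Krullness and the class-group map for (d). Your $\Z/c$-grading formulation of the intersection argument in (d) and your generator-based bookkeeping $u\,g\,v$ in (b) are only cosmetic repackagings of the paper's lowest-degree-component argument and its iterated surjectivity maps with $t = 2c-1$, respectively.
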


\begin{proof}
Our proofs of (a) and (b) 
closely follow \cite[Section 3.2]{PP}. Throughout, let us put $S = R^{(c)}$ for brevity. 

(a) 
If $R$ is generated by $R^1$, then all multiplication maps 
$(R^1)^{\otimes i}  \to R^i$
are surjective. It follows that $R^iR^j = R^{i+j}$ holds for all $i$ and $j$, not just $\subseteq$. We
further conclude that multiplication maps 
\[
(S^1)^{\otimes i} = (R^c)^{\otimes i} = \underbrace{R^c \otimes_\kk R^c \otimes_\kk
\dots \otimes_\kk R^c}_{i \text{ factors}} \to R^{ci} = S^i
\]
are surjective, which proves (a).

(b)
In view of \eqref{E:degi} we need to show that 
\begin{equation}
\label{E:degi2}
(S^1 \otimes I_S^{j-1}) \oplus (I_S^{j-1} \otimes S^1) \tto I_S^j \quad \text{ is onto for $j > i$}
\end{equation}
Since the multiplication map $(R^1)^{\otimes c} \tto S^1 = R^c$ is onto,
\eqref{E:reln} yields the following commutative diagrams with exact rows, for each $j \ge 0$:
\begin{center}
\begin{tikzpicture}[description/.style={fill=white,inner sep=2pt}, >=latex]
\matrix (m) [matrix of math nodes, row sep=3em,
column sep=2.5em, text height=1.5ex, text depth=0.25ex]
{ 0 & I_R^{cj} & (R^1)^{\otimes cj} & R^{cj} & 0 \\
0 & I_S^{j} & (S^1)^{\otimes j} & S^{j} & 0 \\ };
\path[->,font=\scriptsize]
(m-1-1) edge (m-1-2) 
(m-1-2) edge (m-1-3) 
(m-1-3) edge (m-1-4) 
(m-1-4) edge (m-1-5)
(m-2-1) edge (m-2-2) 
(m-2-2) edge (m-2-3) 
(m-2-3) edge (m-2-4) 
(m-2-4) edge (m-2-5);
\path[->>]
(m-1-2) edge (m-2-2)
(m-1-3) edge (m-2-3);
\path
(m-1-4) edge[transform canvas={xshift=.2ex}] (m-2-4)
(m-1-4) edge[transform canvas={xshift=-.2ex}] (m-2-4);
\end{tikzpicture}
\end{center}
Therefore, in order to prove \eqref{E:degi2}, it is enough to show that\begin{equation}
\label{E:degi3}
\big((R^1)^{\otimes c} \otimes I_R^{c(j-1)}\big) \oplus \big(I_R^{c(j-1)} \otimes (R^1)^{\otimes c}\big) 
\tto I_R^{cj} \quad \text{ is onto for $j > i$}
\end{equation}
By our hypothesis on $R$ and \eqref{E:degi}, we know that
$(R^1\otimes I_R^{k-1}) \oplus (I_R^{k-1} \otimes R^1) \tto I_R^k$ 
is onto if $k -1 > c(i-1)$.
It follows that, for all positive integers $t$ with $k - t > c(i-1)$, the map
$\bigoplus_{l=0}^t \big( (R^1)^{\otimes l} \otimes I_R^{k-t} \otimes (R^1)^{\otimes (t-l)}\big)
\tto I_R^k$ is onto. Now assume that $j > i$ and let $k = cj$ and $t = 2c-1$. 
Then $k-t = c(j-2)+1 \ge c(i-1)+1$ and so the foregoing yields surjectivity of the map
\[
\bigoplus_{l=0}^{2c-1} \big( (R^1)^{\otimes l} \otimes I_R^{c(j-2)+1} \otimes (R^1)^{\otimes (2c-1-l)}\big)
\tto I_R^{cj} 
\]
Finally, the above map factors through the map in \eqref{E:degi3}, and hence the latter map is
surjective as well, which was to be shown.

(c)
From now on, we assume that $R$ is commutative.
Clearly, $R$ is integral over $S = R^{(c)}$. Also, the projection of $R \onto S$ with kernel
$\bigoplus_{c\, \nmid\, i} R^i$ is a ``Reynolds operator'', that is, the map is $S$-linear and the restriction 
to $S$ is equal to the identity on $S$.
Therefore, by a result of Hochster and Eagon \cite[Theorem 6.4.5]{BH}, the Cohen-Macaulay property 
descends from $R$ to $S$.

(d)
Assume that $R$ is a Krull domain and let $K$ denote the field of fractions of $R$.
Moreover, let $F$ denote the field of fractions of $S = R^{(c)}$; so $F \subseteq K$. In order
to show that $S$ is a Krull domain,
it suffices to prove that $S = R \cap F$; see \cite[Proposition 1.2]{rF73}. The inclusion $\subseteq$
being clear, consider an element $0 \neq a \in R \cap F$. Then there is a nonzero
$b \in S$ such that $ab \in S$. We need to show that this forces $a \in S$. Suppose otherwise.
Then we may assume that $a = a_s + (\text{ components of higher degree })$ with $0 \neq a_s \in R^s$
and $c \nmid d$. Similarly, write $b = b_{ct} + (\text{ components of higher degree })$ with 
$0 \neq b_{ct} \in R^{ct}$. Then $ab = a_sb_{ct} + (\text{ components of higher degree })$ 
with $0 \neq a_sb_{ct} \in R^{s+ct}$, contradicting the fact that $ab \in S$. This shows that
$S$ is a Krull domain. As for class groups, we have already pointed out in the proof of (c) that
$R$ is integral over $S$. The homomorphism $\Cl(S) \to \Cl(R)$ now follows from
\cite[Proposition 6.4(b)]{rF73}.
\end{proof}

It follows from part (b) above that if $R$ is $1$-generated with no defining relations of degree 
$> c + 1$, then $R^{(c)}$ has no defining relations of degree $> 2$.

\begin{cor}
\label{L:Veronese}
Let $R = \kk[t_1,t_2,\dots,t_d]$ denote the 
(commutative) polynomial algebra in $d\ge 2$ commuting variables over the 
commutative ring $\kk$ and let $S = R^{(2)}$ denote the second Veronese subring of $R$\,. Then:
\begin{enumerate}
\item
$S$ has algebra generators $x_i = t_i^2$ with $1 \le i \le d$ and $x_{i,j} = t_it_j$ with $1 \le i < j \le d$\,.
\item
Defining relations for $S$ are given by $[x_i,x_j] = [x_i,x_{k,l}] = [x_{k,l},x_{r,s}] = 0$
and $x_jx_j = x_{i,j}^2$.
\item
If $\kk$ is Cohen-Macaulay, then so is $S$. Indeed, we have the decomposition
\[
S = \bigoplus_{1 \le i_1 < j_1 < i_2 < \dots < i_r < j_r \le d} 
x_{i_1,j_1} x_{i_2,j_2}\dots x_{i_r,j_r} \, \kk[x_1,x_2,\dots,x_d]
\]
where we allow $r = 0$, the corresponding summand being $\kk[x_1,x_2,\dots,x_d]$.
\item
If $\kk$ is a Krull domain, then so is $S$. If $\kk$ has characteristic $\neq 2$, 
then $\Cl(S) \cong \Cl(\kk) \oplus \Z/2\Z$.
\end{enumerate}
\end{cor}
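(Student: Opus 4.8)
The plan is to extract as much as possible directly from Proposition~\ref{P:Veronese}, applied to $R=\kk[t_1,\dots,t_d]$, viewed as a commutative graded $\kk$-algebra that is $1$-generated by $R^1=\bigoplus_i\kk t_i$ with $R^0=\kk$. Part (a) is then immediate: since $R$ is $1$-generated, Proposition~\ref{P:Veronese}(a) shows $S=R^{(2)}$ is $1$-generated, i.e.\ generated by $S^1=R^2$, and $R^2$ is spanned precisely by the degree-two monomials $x_i=t_i^2$ and $x_{i,j}=t_it_j$ ($i<j$). The Cohen--Macaulay assertion in (c) and the Krull assertion in (d) also follow at once from Proposition~\ref{P:Veronese}(c),(d): a polynomial ring over a Cohen--Macaulay (resp.\ Krull) ring is again of that type, so $R$ inherits the property from $\kk$, and it then passes to $S=R^{(2)}$. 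The embedding $S\hookrightarrow R$ moreover supplies the class-group map $\rho\colon\Cl(S)\to\Cl(R)$ furnished by Proposition~\ref{P:Veronese}(d), which will drive the computation in (d).

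For (b) I would first bound the degrees of the defining relations. The free algebra maps onto the commutative ring $R$ with kernel generated by the commutators $[t_i,t_j]$, which are homogeneous of degree $2$; hence $R$ has no defining relations of degree $>2$, a fortiori none of degree $>3$. By Proposition~\ref{P:Veronese}(b) (the case $c=2$ recorded in the remark following it) this forces $S$ to have no defining relations of degree $>2$. As $S^1=R^2$ is free on the stated generators there are no relations in degree $1$, so the whole relation ideal is generated by a $\kk$-spanning set of $I_S^2=\ker(\operatorname{Sym}^2 S^1\to R^4)$. The commutators make $S$ commutative, and the remaining quadratic relations are the $2\times2$ minors of the symmetric matrix $(x_{i,j})$ (with the convention $x_{i,i}=x_i$); the family $x_ix_j=x_{i,j}^2$ is the ``diagonal'' part. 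The main obstacle in (b) is the bookkeeping showing that these relations span all of $I_S^2$; this is a finite linear-algebra check, organised by the count $\dim_\kk I_S^2=\binom{\binom{d+1}{2}+1}{2}-\binom{d+3}{4}$ over a field and transferred to arbitrary $\kk$ by base change, everything being defined over $\Z$ and $\Z$-free.

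The explicit decomposition in (c) I would prove monomial by monomial. Every monomial $t_1^{e_1}\cdots t_d^{e_d}$ of even total degree lies in $S$; writing $A=\{i:e_i\text{ odd}\}=\{a_1<\dots<a_{2r}\}$ (a set of even cardinality), it factors uniquely as
\[
t_1^{e_1}\cdots t_d^{e_d}
=\bigl(x_{a_1,a_2}\,x_{a_3,a_4}\cdots x_{a_{2r-1},a_{2r}}\bigr)\cdot\prod_{i=1}^{d}x_i^{\lfloor e_i/2\rfloor},
\]
the first factor being the secondary monomial attached to $A$ and the second a monomial in $\kk[x_1,\dots,x_d]$. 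This exhibits $S$ as the displayed direct sum, free over the polynomial subring $\kk[x_1,\dots,x_d]$ on the $2^{d-1}$ secondary monomials (one per even-sized subset $A$), which both re-proves the Cohen--Macaulay property (a finite free module over a Cohen--Macaulay ring) and records the Hironaka decomposition.

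The genuinely delicate point, and where I expect the real work, is the class group in (d). Over a field of characteristic $\ne 2$ one already sees $\Cl(S)\cong\Z/2\Z$ (for $d=2$, $S=\kk[x_1,x_2,x_{1,2}]/(x_1x_2-x_{1,2}^2)$ is the quadric cone with class group $\Z/2\Z$), and the task is to upgrade this to $\Cl(S)\cong\Cl(\kk)\oplus\Z/2\Z$ for an arbitrary Krull domain $\kk$ with $\tfrac12\in\kk$. The clean route is to recognise $S=R^{(2)}=R^{\langle\sigma\rangle}$ as the ring of invariants of the involution $\sigma\colon t_i\mapsto -t_i$. Since $\sigma=-\operatorname{id}$ fixes only the origin it is not a pseudo-reflection for $d\ge 2$, so $S\subseteq R$ is unramified in codimension $1$, and the standard exact sequence for class groups of invariant rings (Fossum~\cite{rF73}) reads
\[
0\to H^1(\langle\sigma\rangle,R^\times)\to\Cl(S)\xrightarrow{\ \rho\ }\Cl(R)^{\langle\sigma\rangle}.
\]
Here $R^\times=\kk^\times$ with trivial $\sigma$-action, so $H^1(\Z/2\Z,\kk^\times)=\kk^\times[2]=\{\pm1\}\cong\Z/2\Z$ (using $\operatorname{char}\kk\ne 2$), while $\Cl(R)^{\langle\sigma\rangle}=\Cl(R)\cong\Cl(\kk)$ because $\sigma$ acts $\kk$-linearly. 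Finally $\kk\hookrightarrow S$ induces a section of $\rho$, since the composite $\Cl(\kk)\to\Cl(S)\xrightarrow{\rho}\Cl(R)\cong\Cl(\kk)$ is the canonical isomorphism $\Cl(\kk)\xrightarrow{\sim}\Cl(\kk[t_1,\dots,t_d])$; hence $\rho$ is split surjective and the sequence gives $\Cl(S)\cong\Cl(\kk)\oplus\Z/2\Z$. The obstacles here are invoking the invariant-theoretic sequence with the correct unramifiedness hypothesis and checking the splitting; the rest reduces to the Proposition and bookkeeping.
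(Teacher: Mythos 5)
Your proposal is correct in substance, and in parts (a), the degree bound in (b), and the Krull half of (d) it follows the paper's own route (both rest on Proposition~\ref{P:Veronese} and the remark after it). The divergences are worth recording. In (b) you take the quadratic relations to be \emph{all} $2\times 2$ minors of the symmetric matrix $(x_{i,j})$ (convention $x_{i,i}=x_i$), not only the diagonal family $x_ix_j = x_{i,j}^2$ listed in the statement; this is in fact necessary for $d \ge 3$: the minor $x_1x_{2,3}-x_{1,2}x_{1,3}$ is a degree-$2$ relation whose monomials do not occur in any diagonal quadric, so it lies outside the ideal generated by the $x_ix_j - x_{i,j}^2$. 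Thus your relation set corrects the printed statement, whose one-line proof (``the indicated relations are exactly the relations of degree $\le 2$'') overlooks the mixed minors; your remaining task, that the minors span $I_S^2$, is a classical fact and your dimension count $\binom{\binom{d+1}{2}+1}{2}-\binom{d+3}{4}$ is the right bookkeeping. In (c) your monomial-by-monomial factorization (pair off the odd-exponent indices consecutively, which is exactly the interleaving condition $i_1<j_1<i_2<\dots<j_r$, and absorb the even part into $\kk[x_1,\dots,x_d]$) is preferable to the paper's derivation of spanning from (a) and (b), since it does not lean on the incomplete relation list; and your observation that $S$ is free of rank $2^{d-1}$ over $\kk[x_1,\dots,x_d]$ yields Cohen--Macaulayness directly, whereas the paper descends it from $R$ via the Reynolds operator and Hochster--Eagon. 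In (d) the splitting of $\Cl(\kk)$ off $\Cl(S)$ via the composite $\Cl(\kk)\to\Cl(S)\to\Cl(R)\cong\Cl(\kk)$ is exactly the paper's step, but the complements are computed genuinely differently: the paper localizes at $\mathcal{C}=\kk\setminus\{0\}$ to get $\Cl(S)/\Cl(\kk)\cong\Cl(S_{\mathcal{C}})$ \cite[Corollary 7.2]{rF73} and quotes Samuel's field-case computation, while you realize $S=R^{\langle\sigma\rangle}$ for $\sigma(t_i)=-t_i$ (legitimate in characteristic $\neq 2$) and compute the kernel of $\Cl(S)\to\Cl(R)$ by Galois descent as $H^1(\Z/2\Z,\kk^\times)=\{\pm 1\}\cong\Z/2\Z$. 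This works, with one caveat: over a general Krull base, unramifiedness in codimension one is not fully justified by ``$-\mathrm{id}$ is not a pseudo-reflection,'' which handles only the horizontal primes; you must also check the vertical primes $\mathfrak{p}R$ for height-one $\mathfrak{p}\subset\kk$ (in residue characteristic $2$ the involution even acts trivially on $R/\mathfrak{p}R$), where nonetheless the ramification index is $1$ because $\mathfrak{p}\subseteq S$, so the descent sequence does apply. The paper's localization route buys freedom from all ramification analysis; yours buys a self-contained cohomological identification of the $\Z/2\Z$ summand and makes transparent exactly where the hypothesis of characteristic $\neq 2$ enters.
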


\begin{proof}
Parts (a) -- (c) follow more or less directly from the corresponding parts of Proposition~\ref{P:Veronese}.

For (a), note that the elements $x_i$ and $x_{i,j}$ generate the $\kk$-module $R^2 = S^1$,
and hence they form algebra generators of $S$. 

For (b), use the fact that $[t_i,t_j] = 0$ for $i<j$ are defining relations for $R$, of degree $2$. 
In view of the remark just before the statement of the corollary, $S$ has no defining relations 
of degree $> 2$. It is easy to see that the indicated relations are exactly the relations among the 
$x_i$ and $x_{i,j}$ of degree $\le 2$.

For (c), recall that the polynomial algebra $R$ is Cohen-Macaulay if (and only if) the
base ring $\kk$ is Cohen-Macaulay. Therefore, $S$ is Cohen-Macaulay as well by 
Proposition~\ref{P:Veronese}(c). For the indicated Hironaka decomposition,
note that the generators in (a) and the relations in (b) immediately imply that 
\[
S = \sum_{1 \le i_1 < j_1 < i_2 < \dots < i_r < j_r \le d} 
x_{i_1,j_1} x_{i_2,j_2}\dots x_{i_r,j_r} \, \kk[x_1,x_2,\dots,x_d]
\]
Since the variables $t_i$ are algebraically independent, this sum is direct.

Finally, for (d), assume that $\kk$ is a Krull domain. Then the polynomial algebra
$R = \kk[t_1,t_2,\dots,t_d]$ is a Krull domain as well by \cite[Proposition 1.6]{rF73}, and hence
so is $S$ by Proposition~\ref{P:Veronese}(d).

As for the structure of the class group, we use the fact that $S$ is free over $\kk$; in fact all
homogeneous components $R^i$ of the polynomial algebra
$R = \kk[t_1,t_2,\dots,t_d]$ are free over $\kk$. Therefore, the embedding $\kk \into S$ gives rise
to a map $\Cl(\kk) \to \Cl(S)$ by \cite[Proposition 6.4(a)]{rF73}. By Proposition~\ref{P:Veronese}(d)
we also have a map $\Cl(S) \to \Cl(R)$ coming from the inclusion $S \into R$. The composite
map $\Cl(\kk) \to \Cl(S) \to \Cl(R)$ is an isomorphism $\Cl(\kk) \cong \Cl(R)$ by \cite[Theorem 8.1]{rF73}.
It follows that $\Cl(\kk)$ injects as a direct summand into $\Cl(S)$. The image of $\Cl(\kk)$ in $\Cl(S)$ is
generated by the classes of all primes of the form $\mathfrak{p}S$, where $\mathfrak{p}$ is a 
height-$1$ prime of $\kk$; it is easy to see that $\mathfrak{p}S$ is indeed a prime of $S$ (of height $1$).
On the other hand, by \cite[Corollary 7.2]{rF73}, there is a surjection $\Cl(S) \onto \Cl(S_{\mathcal{C}})$,
where $\mathcal{C}$ denotes the set of nonzero elements of $\kk$, and the kernel of this map is generated 
by the very same primes $\mathfrak{p}S$. Therefore, $\Cl(S)/\Cl(\kk) \cong \Cl(S_{\mathcal{C}})$. Finally,
$S_{\mathcal{C}}$ is just the second Veronese algebra of the polynomial algebra 
$K[t_1,t_2,\dots,t_d]$, where $K$ is the field of fractions of $\kk$. The class group of $S_{\mathcal{C}}$
has been determined in \cite[Example 1 on page 58]{Samuel} to be isomorphic to
$\Z/2\Z$. This proves part (d).
\end{proof}

Since we will be exclusively concerned with commutative algebras in later
sections, we will not list the commuting relations, such as the relations
$[x_i,x_j] = [x_i,x_{k,l}] = [x_{k,l},x_{r,s}] = 0$ in Corollary~\ref{L:Veronese},
in our future results. Thus, in the context of commutative algebras,
the defining relations for the Veronese algebra
$S = R^{(2)}$ in Corollary~\ref{L:Veronese} are the relations $x_jx_j = x_{i,j}^2$.

\section{Type $\rB_n$}
\label{SS:Bn}

We start with the root lattice for the root system of type $\rB_n$ $(n \ge 2)$, because it has particularly nice 
invariants and will be used as an 
aid in calculating the invariants of the other three classical root lattices. We are mainly
interested in the multiplicative invariants under the Weyl group 
$\We = \We(\rB_n)$; however we also 
include the invariants under the symmetric group $\Sy_n \le \We$ here, since they 
will be useful for finding invariants of the root lattice of type $\rA_n$. To find the invariants for $\rB_n$ we 
need not invoke the general methods described above in \ref{SS:monoid} and \ref{SS:Hilbert};
instead we use a more straightforward direct approach.
In fact, all that is needed here is the fundamental theorem for 
$\Sy_n$-invariants; see, e.g., \cite[Th{\'e}or{\`e}me 1 on p.~A IV.58]{BouAlg}.

\subsection{Root system, root lattice and Weyl group}
\label{SSS:rootBn}

The root system of type $\rB_n$ is the following subset of $\E = \R^n$:
\begin{equation}
\label{E:BnPhi}
\RPhi = \{ \pm \e_i \mid 1 \le i \le n \} \cup \{ \pm \e_i \pm \e_j \mid 1 \le i < j \le n \}
\end{equation}
The root lattice $L(\RPhi) = \Z\RPhi$ will be denoted by $B_n$; so
\[
B_n = \bigoplus_{i=1}^n \Z \e_i
\]
The Weyl group $\We = \We(\RPhi)$ is the semidirect product of the group of all permutation
matrices in $\GL(\E) = \GL_n(\R)$ with the group of all
diagonal matrices $\cD_n \le \GL_n(\Z)$. The group of permutation matrices is isomorphic
to the symmetric group $\Sy_n$,
operating by permuting the basis $\{\e_i\}_1^n$ via $\sigma(\e_i) = \e_{\sigma(i)}$. The diagonal group
$\cD_n \cong \{ \pm 1\}^n$ operates via $\e_i \mapsto \pm \e_i$. Thus,
\[
\We =\cD_n \rtimes \Sy_n \cong \{ \pm 1\}^n \rtimes \Sy_n
\]

\subsection{Diagonalizable reflections}
\label{SSS:diagBn}

We determine the subgroup $\cD \le \We$ that is
generated by the diagonalizable reflections on $B_n$\,; see Lemma~\ref{L:diag} and Theorem~\ref{T:cl}.
Note that $\cD_n$ is generated by the 
diagonalizable reflections $d_i$ with $d_i(\e_j) = \e_j$ for $i \neq j$ and $d_i(\e_i) = -\e_i$.
Thus, we have 
$\cD_n \le \cD$. In fact, equality holds here. To see this, recall from Lemma~\ref{L:diag} that
$\cD$ is abelian; so $\cD \cap \Sy_n$ is contained in the centralizer $C_{\Sy_n}(\cD_n)$ of
$\cD_n$ in $\Sy_n$\,. Since $C_{\Sy_n}(\cD_n) = \{ 1\}$, we must have
\[
\cD = \cD_n
\]

\subsection{Multiplicative $\We$-invariants}
\label{SSS:multBn}

Setting $x_i:= \x^{\e_i}$ we form the group algebra, 
\[
\Z[B_n] = \Z[x_1^{\pm 1}, x_2^{\pm 1}, \dots, x_n^{\pm 1}] 
\]
We start by determining invariants under the normal subgroup $\cD_n$. This has been
carried out in detail in \cite[Example 3.5.1]{Lo3}, but we briefly review the calculation.
Since  $\Z[B_n] \cong \Z[x^{\pm 1}]^{\otimes n}$
and $\cD_n \cong \{ \pm 1\}^n$, it is easy to see that
$\Z[B_n]^{\cD_n} \cong \left( \Z[x^{\pm 1}]^{\{ \pm 1\}}\right)^{\otimes n}$.
It is also straightforward to check that $\Z[x^{\pm 1}]^{\{ \pm 1\}} = \Z[x + x^{-1}]$.
Thus, we obtain
\[
\Z[B_n]^{\cD_n} = \Z[\varphi_1, \dots, \varphi_n] \qquad \text{with}\quad
\varphi_i :=x_i + x_i^{-1}
\]
This is a polynomial algebra over $\Z$\,.
The subgroup $\Sy_n \le \We$ permutes the variables $\varphi_i$ in the standard fashion: 
$\sigma(\varphi_i) = \varphi_{\sigma(i)}$\,. Since $\Z[B_n]^{\We} =  \Z[\varphi_1, \dots, \varphi_n]^{\Sy_n}$,
the fundamental theorem for 
$\Sy_n$-invariants yields the final result:
\begin{equation}
\label{E:Bn}
\feqn{\Z[B_n]^{\We} = \Z[\sigma_1, \dots, \sigma_n]}
\end{equation}
where $\sigma_i$ denotes the $i^{\text{th}}$ elementary symmetric function in the 
variables $\varphi_1,\dots,\varphi_n$. In particular, we see that $\Z[B_n]^{\We}$ is a 
polynomial algebra over $\Z$, giving 
\[
\Cl(\Z[B_n]^{\We})=0
\]
This is of course consistent with Theorem~\ref{T:cl}(c).
We also mention that the fact that $\Z[B_n]^{\We}$ is a polynomial algebra is also a consequence of the 
Bourbaki's theorem for multiplicative invariants of weight lattices,
because $B_n$ is isomorphic to the weight lattice of the root system of type $\rC_n$
which will be discussed later.

\subsection{Multiplicative $\Sy_n$-invariants}
\label{SSS:multBnSn}

Now we restrict the group action on $B_n$ to the permutation subgroup $\Sy_n \le \We$. 
First notice that 
\[
\Z[B_n] = \Z[x_1^{\pm 1}, x_2^{\pm 1}, \dots, x_n^{\pm 1}] = \Z[x_1, x_2, \dots, x_n][s_n^{-1}]
\]
where $s_n = x_1x_2\dots x_n$ is the $n^{\text{th}}$ elementary symmetric polynomial in the 
variables $x_i$. 
Just as above, the $\Sy_n$ action is given by $\sigma(x_i)=x_{\sigma(i)}$ for all $\sigma \in S_n$. 
The fundamental theorem for $\Sy_n$-invariants gives 
$\Z[x_1, x_2, \dots, x_n]^{\Sy_n}\cong \Z[s_1, s_2, \dots, s_n]$ where $s_j$ is the 
$j^{\text{th}}$ elementary symmetric function in the variables $x_i$. 
Since we clearly have
\[
(\Z[x_1, x_2, \dots, x_n][s_n^{-1}])^{\Sy_n} = \Z[x_1, x_2, \dots, x_n]^{\Sy_n}[s_n^{-1}]
\]
it follows that,
\begin{equation}
\label{E:BnSn}
\feqn{\Z[B_n]^{\Sy_n} = \Z[s_1,s_2, \dots, s_{n-1}, s_n^{\pm 1}]\cong \Z[\Z_+^{n-1} \oplus \Z]}
\end{equation}
This is a mixed Laurent polynomial ring or, alternatively, the monoid $\Z$-algebra of the
(additive) monoid $\Z_+^{n-1} \oplus \Z$.

\section{Type $\rA_n$}
\label{SS:An}

Next, we look at the root lattice of the root system of type $\rA_n$; this root lattice 
will be denoted by $A_n$. 
Actually, we will consider the root lattice $A_{n-1}$ $(n \ge 2)$, because this fits better
with the notation of Section~\ref{SS:Bn} which we will continue to use.

\subsection{Root system, root lattice and Weyl group}
\label{SSS:rootAn}

Here, we take $\E$ to be the subspace of $\R^n$ consisting of all points whose coordinate sum
is $0$. The root system of type $\rA_{n-1}$ is given by
\begin{equation}
\label{E:AnPhi}
\RPhi =  \{  \e_i - \e_j \mid 1 \le i, j \le n, i \neq j\}
\end{equation}
Note that $\RPhi$ is contained in the root system of type $\rB_n$ 
as displayed in \eqref{E:BnPhi}. Therefore, the
root lattice $A_{n-1} = \Z\RPhi$ is contained in the root lattice $B_n$\,. The 
Weyl group $\We = \We(\rA_{n-1})$ is the subgroup $\Sy_n \le \We(\rB_n)$ permuting 
the basis $\{\e_i\}_1^n$ of $\E = \R^n$ as usual:
\[
\We = \Sy_n
\]
The vectors
\[
\alpha_i:=\e_i-\e_{i+1} \qquad (i = 1 ,\dots,n-1)
\]
form a base of the root system $\RPhi$. So
the root lattice $A_{n-1} = L(\RPhi)$ is given by
\[
A_{n-1} = \bigoplus_{i=1}^{n-1} \Z \alpha_i 
\]
Note that there is an exact sequence of $\Sy_n$-lattices, that is, an exact sequence of free
abelian groups with $\Sy_n$-equivariant maps,
\begin{equation}
\label{E:AnSeq}
0 \tto A_{n-1} \tto B_n \tto \Z \tto 0
\end{equation}
Here $\Z$ has the trivial $\Sy_n$-action and the map $B_n \to \Z$ sends $\e_i \mapsto 1$\,.

\subsection{Multiplicative $\We$-invariants}
\label{SSS:multAn}

Using the notation $x_i = \x^{\e_i}$ as for $B_n$ above, we set $y_i:=\x^{\alpha_i}=\frac{x_i}{x_{i+1}}$ to 
get the group algebra 
\[
\Z[A_{n-1}] =  \Z[y_1^{\pm 1}, y_2^{\pm 1}, \dots, y_{n-1}^{\pm 1}]
\]
From \eqref{E:AnSeq}, we see that $\Z[A_{n-1}]$  is the degree-zero component of 
the Laurent polynomial algebra
$\Z[B_n] = \Z[x_1^{\pm 1}, x_2^{\pm 1}, \dots, x_n^{\pm 1}]$, 
graded by total degree in the variables $x_i$.
Since the action of $\We = \Sy_n$ is degree-preserving, it follows that 
the multiplicative invariant algebra $\Z[A_{n-1}]^{\Sy_n}$ is the 
degree-zero component of $\Z[B_n]^{\Sy_n} = \Z[ s_1, \dots, s_{n-1}, s_n^{\pm 1}]$; see \eqref{E:BnSn}. 
Since $\deg s_i = i$\,, it is easy to see that a $\Z$-basis for the degree-zero component 
of $\Z[ s_1,s_2, \dots, s_n^{\pm 1}]$ is given by the elements 
\[
\frac{s_1^{l_1}s_2^{l_2}\cdots s_{n-1}^{l_{n-1}}}{s_n^{l_n}} \qquad 
\text{where $l_i \in \Z_+$ and $\sum_{i=1}^{n-1} il_i = nl_n$}
\]
Hence $\Z[A_{n-1}]^{\We}$ is isomorphic to the monoid $\Z$-algebra $\Z[M_{n-1}]$,
where $M_{n-1}$ is the following submonoid of $\Z_+^{n-1}$:
\begin{equation}
\label{E:An-1'}
\feqn{M_{n-1} = \Big\{ (l_1, l_2, \dots, l_{n-1}) \in \Z_+^{n-1} \mid \sum_{i=1}^{n-1} i l_i\in (n) \Big\}}
\end{equation}
The isomorphism 
is explicitly given by
\begin{equation}
\label{E:An-1"}
\feqn{%
\begin{aligned} 
 \Z[&M_{n-1}] & &\longiso &  &\quad \Z[A_{n-1}]^{\We} \\
&\upin &&& &\quad\quad \upin  \\ 
(l_1,l_2, &\dots, l_{n-1})  & &\longmapsto & &\frac{s_1^{l_1}s_2^{l_2}\cdots s_{n-1}^{l_{n-1}}}{s_n^{l_n}} 
\end{aligned}
}
\end{equation}
with $l_n = \frac{1}{n}\sum_{i=1}^{n-1} i l_i$\,.

Using Proposition~\ref{P:Hilbert}(c) above, we can easily find the $n-1$ indecomposables 
of the monoid $M_{n-1}$ that correspond 
to the primary invariants for $\Z[A_{n-1}]^{\We}$. 
It suffices to find $z_i$, the order of our fundamental weights, $\varpi_i$, modulo $A_{n-1}$. 
Then applying the isomorphism $\Omega:\Z[A_{n-1} \cap \RLambda_+] \longiso \Z[A_{n-1}]^{\We}$ 
in Theorem~\ref{T:monoid} to the indecomposable 
elements $m_i=z_i\varpi_i$ will give us the primary invariants for this lattice. 
In detail, the fundamental weights $\varpi_i$ with respect to the above base
$\{ \alpha_i\}_1^{n-1}$ of $\RPhi$ are given by (see \cite{Bou})
\[
\scriptsize
\begin{aligned}
\varpi_i &= \e_1+\cdots + \e_i -\frac{i}{n} \sum_{j=1}^n \e_j \\
&= \frac{1}{n} [(n-i)(\alpha_1 + 2\alpha_2 +\cdots + (i-1)\alpha_{i-1}) + i((n-i)\alpha_i + (n-i-i)\alpha_{i+1} +\cdots + \alpha_{n-1})]\\
&= \alpha_1 + 2\alpha_2 +\cdots + (i-1)\alpha_{i-1} -
\frac{i}{n}(\alpha_1 + 2\alpha_2 +\cdots +(i-1)\alpha_{i-1} - (n-i)\alpha_i -\cdots -\alpha_{n-1}) \\
\end{aligned}
\]
To guarantee that $m_i=z_i\varpi_i \in A_{n-1}$, we must have integral coefficients for all $\alpha_i$. So 
$z_i$ must be the smallest positive integer that satisfies the condition $z_i\cdot \frac{i}{n} \in \Z$. 
Explicitly, $z_i=\frac{n}{i_n}$ with 
\[
i_n=\gcd(i,n)
\]
Now applying $\Omega$ gives primary invariants $\pi_i=\osum(\varpi_i)^{\frac{n}{i_n}}$. 
One can easily calculate the $\Sy_n$-orbit sum of $\varpi_i$ from the first expression for 
$\varpi_i$ given above:
\[
\osum(\varpi_i)=s_is_n^{-\frac{i}{n}}
\]
where $s_i$ is the $i^{\text{th}}$ elementary symmetric function in $x_1,\dots,x_n$ as before. 
Hence $\Z[A_{n-1}]^{\We}$ has the following primary invariants:
\begin{equation}
\label{E:An-1primary}
\feqn{\pi_i = s_i^{\frac{n}{i_n}}s_n^{-\frac{i}{i_n}} \quad i=1, \dots, n-1}
\end{equation}

\subsection{Computations}
\label{SS:computationsAn}

For general $n$, it is difficult to write down all secondary invariants,  a Hironaka decomposition, 
and the defining relations for $\Z[A_{n-1}]^{\Sy_n}$. In this section,
we will use the computer algebra system CoCoA to find Hilbert bases for the monoid
$M_{n-1}$ from \eqref{E:An-1'} for some specific values of $n$. 
Once we have the Hilbert basis for $M_{n-1}$ available, we can use Theorem~\ref{T:monoid} 
and Proposition~\ref{P:Hilbert}
to find the fundamental invariants and a Hironaka decomposition of the invariant algebra $\Z[A_{n-1}]^{\We} 
\cong \Z[M_{n-1}]$.
We will use the following description of the monoid $M_{n-1}$, which is clearly equivalent
to \eqref{E:An-1'}:
\[
M_{n-1} 
= \Big\{(l_1, l_2, \dots,  l_n) \in \Z_+^n \mid \sum_{i=1}^{n-1} il_i -nl_n =0 \Big\}
\]
Thus, $M_{n-1}$ is the kernel in $\Z_+^n$ of the following matrix: 
\[
A=[1,2, 3, \cdots, n-1, -n]
\]
Here are two sample calculations with CoCoA, for $n=3$ and $n=4$: 

\vspace{.3in}

\noindent $\mathbf{n=3}$: 

\begin{minipage}[t]{2in}
\begin{verbatim}
A:=Mat([[1,2,-3]]);
HilbertBasisKer(A);
[[1, 1, 1], [3, 0, 1], [0, 3, 2]]
\end{verbatim}
\end{minipage}

\bigskip

\noindent The first two components of the output vectors tell us 
the Hilbert basis of our monoid $M_2$: $(1,1)$, $(3,0)$ and $(0,3)$. 
 By \eqref{E:An-1"}, these generators
correspond to the fundamental
invariants $\pi_1 = \frac{s_1^3}{s_3}$, $\pi_2 = \frac{s_2^3}{s_3^2}$ and  
$\mu = \frac{s_1s_2}{s_3}$ respectively. 
The monoid relation $3m_3 = m_1 + m_2$ becomes 
$\mu^3 = \pi_1\pi_2$ in $\Z[A_2]^{\Sy_3}$.
This yields the following presentation for the multiplicative invariant algebra: 
\[
\Z[A_2]^{\Sy_3} = \Z[\pi_1,\pi_2, \mu]  \cong \Z[x,y,z]/(z^3-xy)
\]
Evidently, a Hironaka decomposition of $\Z[A_2]^{\Sy_3}$ is
\[
\Z[A_2]^{\Sy_3} = \Z[\pi_1,\pi_2]  \oplus \mu \Z[\pi_1,\pi_2] \oplus \mu^2 \Z[\pi_1,\pi_2]
\]
When explicitly written out in terms of the standard generators $y_i$ of the Laurent
polynomial algebra
$\Z[A_{n-1}] =  \Z[y_1^{\pm 1}, y_2^{\pm 1}, \dots, y_{n-1}^{\pm 1}]$, 
the above fundamental invariants $\pi_1,\pi_2, \mu$ have rather
unwieldy expressions. A more economical system of 
fundamental invariants for $\Z[A_{2}]^{\Sy_3}$ is given by 
\begin{align*}
\mu-3 &= y_1 + y_1^{-1} + y_2 + y_2^{-1} + y_1y_2 + y_1^{-1}y_2^{-1} &&= \osum(\alpha_1) \\
\pi_1-3\mu +3 &= y_1^2y_2 + y_1^{-1}y_2 + y_1^{-1}y_2^{-2} &&= \osum(2\alpha_1+\alpha_2)\\
\pi_2-3\mu +3 &= y_1y_2^2 + y_1y_2^{-1} + y_1^{-2}y_2^{-1} &&= \osum(\alpha_1+2\alpha_2)
\end{align*}

\vspace{.3in}

\noindent $\mathbf{n=4}$: 

\begin{minipage}[t]{2in}
\begin{verbatim}
A:=Mat([[1,2,3,-4]]);
HilbertBasisKer(A);
[[0, 2, 0, 1], [1, 0, 1, 1], [2, 1, 0, 1], 
[0, 1, 2, 2], [4, 0, 0, 1], [0, 0, 4, 3]]
\end{verbatim}
\end{minipage}

\bigskip

\noindent Here we obtain 
the following Hilbert basis of $M_3$: $(0,2,0)$, $(1,0,1)$, $(2, 1, 0)$, $(0, 1, 2)$,
$(4, 0, 0)$ and $(0,0,4)$. To obtain a presentation of the invariant algebra 
$\Z[A_3]^{\Sy_4} \cong \Z[M_3]$,
we will consider the polynomial
algebra $\Z[x, y, z, u, v, w]$ in six variables and the 
epimorphism 
\[
\Z[x, y, z, u, v, w] \onto S:= \Z[M_3]
\]
sending each variable to one of the elements in our Hilbert basis:
\[
\begin{aligned}
x &\mapsto (4, 0, 0) &
y &\mapsto (0, 2, 0) &
z &\mapsto (0, 0, 4) \\
u &\mapsto (2, 1, 0) &
v &\mapsto (1, 0, 1) &
w &\mapsto (0, 1, 2) 
\end{aligned}
\]
We will denote the kernel of this map by $I$.
Our goal is to find generators of the ideal $I$; these are the desired defining relations for $S$. 
To this end, we view $M_3 \subseteq \Z_+^3$ and the algebra $S$ as contained in the 
polynomial algebra in three variables:
\[
S = \Z[M_3] \subseteq T := \Z[\Z_+^3] 
\cong \Z[a, b, c]
\]
For example, the Hilbert basis element $(4,0,0)$ of $M_3$ becomes the monomial $a^4$ 
when viewed in $\Z[a^{\pm 1}, b^{\pm 1}, c^{\pm 1}]$,
and $(0, 1, 2)$ becomes $bc^2$.
The Laurent polynomial algebra $\Z[a^{\pm 1}, b^{\pm 1}, c^{\pm 1}]$ can be presented as
the image of the polynomial algebra in the variables $a, b, c$ plus one extra variable, $d$, which
serves as the inverse of the product $abc$. Thus, we may consider the map
\[
T[x, y, z, u, v, w] \onto T
\]
that is the identity on $T$ and maps the variables $x,\dots, w$ as indicated above. 
Thus, we have nine variables altogether. The ideal $I$
arises as the so-called elimination ideal, eliminating the variables $a,b,c,d$.
For more details, we refer to Algorithm 4.5 in \cite[page 32]{Sturm}. 
The computation is carried out with the computer algebra
system \textsc{Magma} (V2.19-10):

\begin{verbatim}
> Z := IntegerRing();
> S<a,b,c,x,y,z,u,v,w>:= PolynomialRing(Z,9);
> I:=ideal<S|x-a^4,y-b^2,z-c^4,u-a^2*b,v-a*c,w-b*c^2>;
> EliminationIdeal(I,3);
Ideal of Polynomial ring of rank 9 over Integer Ring
Order: Lexicographical
Variables: a, b, c, x, y, z, u, v, w
Basis:
[
    z*u - v^2*w,
    y*v^2 - u*w,
    y*z - w^2,
    x*w - u*v^2,
    x*y - u^2,
    x*z - v^4
]
\end{verbatim}

To summarize, we have obtained the following presentation of our multiplicative invariant 
algebra:

{\footnotesize
\[
\begin{aligned}
\Z[A_3]^{\Sy_4} &\cong \Z[M_3] \\
&\cong \Z[x, y, z, u, v, w]/(
 zu - v^2w,
    yv^2 - uw,
    yz - w^2,
    xw - uv^2,
    xy - u^2,
    xz - v^4)
\end{aligned}
\]
}

\noindent
The first defining relation, $zu - v^2w$, for example, comes from the equation $(0,0,4) + (2,1,0) = 2(1,0,1) +
(0,1,2)$ in $M_3$.

\subsection{Class group}
\label{SSS:clAn}

For $A_1$, one can easily check that the invariant algebra is a polynomial ring giving $\Cl(\Z[A_1]^{\Sy_2}) = 0$. 
To find the class group $\Cl(\Z[A_{n-1}]^{\Sy_n})$ for $n\ge 3$, 
we use Theorem~\ref{T:cl}. First, following \cite[Example 4.2.2]{Lo3}, we will show that 
\[
\cD=\{1\}
\]
To see this, we first notice that the only elements of $\Sy_n$ that act as reflections on $A_{n-1}$
are transpositions. Moreover, if $\sigma\in \Sy_n$ is a transposition then \cite[Lemma 2.8.2]{Lo3} gives 
$H^1(\langle \sigma \rangle,A_{n-1}) = 0$, because $\sigma$ has fixed points in 
$\{1,2,\dots,n\}$ for $n \ge 3$. Therefore,
$\sigma$ does not act as a diagonalizable reflection on $A_{n-1}$\,, proving that $\cD = \{ 1\}$,
as claimed. Now Theorem~\ref{T:cl}(a)(b) gives 
\[
\Cl(\Z[A_{n-1}]^{\Sy_n}) \cong H^1(\Sy_n, A_{n-1}) \cong \Lambda/A_{n-1}
\]
where $\Lambda$ is the weight lattice of the root system of type $\rA_{n-1}$\,. 
This group is known to be cyclic of
order $n$\,. Indeed, the factor $\Lambda/A_{n-1}$ can be found in \cite[Planche I, (VIII)]{Bou}, 
and the group $H^1(\Sy_n, A_{n-1})$ is also covered by \cite[Lemma 2.8.2]{Lo3}.

\section{Type $\rC_n$}

We continue using the notation of Sections~\ref{SS:Bn} and \ref{SS:An}.

\subsection{Root system, root lattice and Weyl group}
\label{SSS:rootCn}

Here, $\E = \R^n$ and 
\begin{equation}
\label{E:CnPhi}
\RPhi = \{ \pm 2\e_i \mid 1 \le i \le n \} \cup \{ \pm \e_i \pm \e_j \mid 1 \le i < j \le n \}
\end{equation}
This root system contains the root system of type $\rA_{n-1}$ from \eqref{E:AnPhi}, 
and it is identical to the
root system of type $\rB_n$ displayed in \eqref{E:BnPhi} except for the factor 
$2$ in the first set of roots above.
The Weyl group is exactly the same as for type $\rB_n$:
\[
\We =\cD_n \rtimes \Sy_n \cong \{ \pm 1\}^n \rtimes \Sy_n
\]
If $n=2$ then the root systems of type $\rB_n$ and $\rC_n$ are isomorphic, and so we may 
assume that $n \ge 3$ below.
By the foregoing the root lattice $L = L(\RPhi)$, which will be denoted by $C_n$, is sandwiched 
between the root lattices $A_{n-1}$ and
$B_n$. Under the exact sequence \eqref{E:AnSeq}, the root lattice $C_n$ is the preimage of $2\Z$ 
in $B_n$; so $C_n$ fits into the following short exact sequence of $\We$-lattices:
\begin{equation}
\label{E:CnSeq}
0 \tto C_n \tto B_n \tto \Z/2\Z \tto 0
\end{equation}
where each basis element $\e_i$ of $B_n$ is mapped to $\bar{1} \in \Z/2\Z$\,. 
The vectors $\alpha_i:=\e_i-\e_{i+1}$ for $i=1,\dots, n-1$ together with
$\alpha_n=2\e_n$ form a base of the root system $\RPhi$, and hence these
vectors are also a $\Z$-basis of the root lattice
of $C_n$\,.

\subsection{Multiplicative $\We$-invariants}
\label{SSS:multCn}

Sequence \eqref{E:CnSeq} will help us calculate multiplicative invariants of the root lattice $C_n$\,,
just as \eqref{E:AnSeq} did with $A_{n-1}$\,. As before, we let $x_i = \x^{\e_i}$ and 
$y_i = \x^{\alpha_i}$; so $y_i=\frac{x_i}{x_{i+1}}$ for $i=1,\dots, n-1$ and 
$y_n = x_n^2$. Then
\[
\Z[C_n] = \Z[y_1^{\pm 1}, y_2^{\pm 1}, \dots, y_n^{\pm 1}]
\]
We deduce from \eqref{E:CnSeq} that $\Z[C_n]$ is the subalgebra of 
$\Z[B_n] = \Z[x_1^{\pm 1}, x_2^{\pm 1}, \dots, x_n^{\pm 1}]$ that is spanned by the
monomials of even total degree in the $x_i$s. Using this observation, we can easily find the invariants 
$\Z[C_n]^{\We}$. Indeed, note that the action of $\Sy_n$ on 
$\Z[B_n]  = \Z[x_1^{\pm 1}, x_2^{\pm 1}, \dots, x_n^{\pm 1}]$
is degree preserving and $\cD_n$ preserves at least the \textit{parity} of the degree. Thus, 
$\We =\cD_n \rtimes \Sy_n$ preserves parities of degrees as well, which allows us to conclude that 
$\Z[C_n]^{\We}$ is the even-degree component of $\Z[B_n]^{\We} = \Z[\sigma_1, \dots, \sigma_n]$\,,
where 
\[
\feqn{\sigma_i = \sum_{\substack{I \subseteq \{ 1,2,\dots,n\}\\ |I| = i}} \ \prod_{j \in I} (x_j + x_j^{-1})}
\]
is the $i^{\text{th}}$ elementary symmetric function in the variables $x_j + x_j^{-1}$ $(j = 1,2,\dots,n)$
as in \eqref{E:Bn}. Therefore, a $\Z$-basis for $\Z[C_n]^{\We}$ is
given by $\sigma_1^{l_1}\sigma_2^{l_2}\cdots \sigma_n^{l_n}$ with $\sum_{i=1}^n il_i \in 2\Z$. 
These observations prove most of part (a) of the following theorem.

\begin{thm}
\label{T:Cn}
\begin{enumerate}
\item
\textbf{Algebra structure:}
$\Z[C_n]^{\We}$ is isomorphic to the monoid algebra 
$\Z[M_n]$ with 
\[
M_n = \Big\{ (l_1,l_2, \dots, l_n) \in \Z_+^n \mid \sum_{i=1}^n il_i \equiv 0 \bmod 2 \Big\}
\]
The isomorphism is given by 
\begin{align*} 
\hspace{1.2in} 
 \Z[&M_n] & &\longiso &  \Z&[C_n]^{\We} 
\hspace{1.2in}		\\
&\upin &&& &\upin  \\ 
(l_1,l_2, &\dots, l_n)  & &\longmapsto & \sigma_1^{l_1}\sigma_2^{l_2}&\cdots \sigma_n^{l_n} 
\end{align*}
The monoid $M_n$
decomposes as $M_n \cong \Z_+^{\lfloor \frac{n}{2} \rfloor} \oplus V$ with
\[
V = \Big\{ (k_1,k_2, \dots,k_{\lceil \frac{n}{2} \rceil}) \in \Z_+^{\lceil \frac{n}{2} \rceil } 
\mid \sum_i k_i \equiv 0 \bmod 2 \Big\}
\]
Thus, $\Z[C_n]^{\We}$ is a polynomial ring in
$\lfloor \frac{n}{2} \rfloor$ variables over  the second Veronese subring
of a polynomial algebra in $\lceil \frac{n}{2} \rceil$ variables over $\Z$.
\item
\textbf{Fundamental invariants:}
The algebra $\Z[C_n]^{\We}$ is generated by the following
 $n + {\lceil \frac{n}{2} \rceil \choose 2}$  invariants: 
\[
\begin{aligned}
\pi_i  &= \begin{cases} \sigma_i &\text{for $i$ even}\\
	\sigma_i^2 & \text{for $i$ odd} \end{cases} \\
\gamma_{i,j} &= \sigma_i\sigma_j \quad (1 \le i < j \le n \text{ and  $i,j$ both odd})
\end{aligned}
\]
The $\pi_i$ are primary invariants and the $\gamma_{i,j}$ are secondary: 
$\Z[\pi_1,\dots,\pi_n]$ is a polynomial algebra over $\Z$ and $\Z[C_n]^{\We}$
is a finite module over $\Z[\pi_1,\dots,\pi_n]$.
\item
\textbf{Hironaka decomposition:}
\[
\Z[C_n]^{\We} = 
\bigoplus_{\substack{1 \le i_1 < j_1 < i_2 < \dots < i_t < j_t \le n\\ \text{\rm all odd}}} 
\gamma_{i_1,j_1} \gamma_{i_2,j_2}\dots \gamma_{i_t,j_t} \, \Z[\pi_1,\dots,\pi_n]
\]
(Here, we allow $t = 0$, the corresponding summand being $\Z[\pi_1,\dots,\pi_n]$.)
\item
\textbf{Defining relations:}
The $\lceil \frac{n}{2} \rceil \choose 2$ relations
\[
\pi_i \pi_j = \gamma_{i,j}^2 \qquad (1 \le i < j \le n \text{ and  $i,j$ both odd})
\]
are defining relations for $\Z[C_n]^{\We}$.
\end{enumerate}
\end{thm}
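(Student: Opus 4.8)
The plan is to reduce the entire theorem to the structure of a single second Veronese subring, using the monoid decomposition asserted in part~(a), and then to read off parts (b)--(d) directly from Corollary~\ref{L:Veronese}. The bulk of the mathematics is already done in the discussion preceding the statement, which shows that $\sigma_1^{l_1}\cdots\sigma_n^{l_n}$ with $(l_1,\dots,l_n) \in M_n$ is a $\Z$-basis of the even-degree component $\Z[C_n]^{\We}$ of $\Z[B_n]^{\We} = \Z[\sigma_1,\dots,\sigma_n]$; this gives the isomorphism $\Z[C_n]^{\We} \cong \Z[M_n]$.

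For the decomposition of $M_n$ in part~(a), I would observe that the defining congruence $\sum_{i=1}^n i l_i \equiv 0 \bmod 2$ depends only on the parities of the indices $i$, so it is equivalent to $\sum_{i \text{ odd}} l_i \equiv 0 \bmod 2$. Hence the coordinates $l_i$ with $i$ even are entirely unconstrained (there are $\lfloor n/2 \rfloor$ of them), while the coordinates with $i$ odd (there are $\lceil n/2 \rceil$ of them) range exactly over the submonoid $V$. This yields $M_n \cong \Z_+^{\lfloor n/2\rfloor} \oplus V$. The key identification is that $\Z[V]$ is the second Veronese subring $R^{(2)}$ of $R = \Z[t_1,\dots,t_{\lceil n/2\rceil}]$, since $V$ is precisely the monoid of exponent vectors of even total degree, which is the monoid underlying $R^{(2)}$. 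Splitting the monoid algebra along the direct sum via the standard isomorphism $\Z[P \oplus Q] \cong \Z[P] \otimes_\Z \Z[Q]$ then presents $\Z[M_n]$ as a polynomial ring in $\lfloor n/2 \rfloor$ variables over $R^{(2)}$, as claimed.

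With this reduction in hand, parts (b)--(d) follow by applying Corollary~\ref{L:Veronese} with $d = \lceil n/2 \rceil$ and transcribing its conclusions through the chain of isomorphisms. Under these isomorphisms the free polynomial variables correspond to the $\sigma_i$ with $i$ even, while the Veronese variables $t_a$ correspond to the $\sigma_i$ with $i$ odd; thus the Veronese generators $t_a^2$ and $t_a t_b$ become $\sigma_i^2$ (for $i$ odd) and $\sigma_i\sigma_j$ (for $i<j$ both odd), which together with the even-index $\pi_i = \sigma_i$ give exactly the $n + \binom{\lceil n/2 \rceil}{2}$ invariants of part~(b). The relations $x_ix_j = x_{i,j}^2$ of Corollary~\ref{L:Veronese} transcribe to $\pi_i\pi_j = \gamma_{i,j}^2$, yielding part~(d), and the Hironaka decomposition of part~(c) is obtained by tensoring the decomposition of $R^{(2)}$ in Corollary~\ref{L:Veronese}(c) with the polynomial part. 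To finish part~(b), I would note that the $\sigma_i$ are algebraically independent over $\Z$ by \eqref{E:Bn}, hence so are their prescribed powers $\pi_i$, so $\Z[\pi_1,\dots,\pi_n]$ is a genuine polynomial algebra; finiteness of $\Z[C_n]^{\We}$ over it is immediate from the finitely many module generators $\gamma_{i_1,j_1}\cdots\gamma_{i_t,j_t}$ appearing in part~(c).

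The only real obstacle here is bookkeeping rather than mathematics. The abstract Veronese description of Corollary~\ref{L:Veronese} is indexed by $\{1,\dots,\lceil n/2\rceil\}$, whereas the invariants are indexed by the odd elements of $\{1,\dots,n\}$, and this relabeling must be carried out consistently across the generators, the relations, and especially the ordered products in the Hironaka decomposition, so that the index condition ``$1 \le i_1 < j_1 < \dots < i_t < j_t \le n$, all odd'' matches the ordered condition of Corollary~\ref{L:Veronese}(c) after the substitution. Provided this translation is done carefully, all four parts drop out of the single structural fact that $\Z[C_n]^{\We}$ is a polynomial extension of a second Veronese algebra.
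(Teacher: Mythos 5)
Your proposal is correct and follows essentially the same route as the paper's proof: you reduce the defining congruence $\sum_i il_i \equiv 0 \bmod 2$ to $\sum_{i \text{ odd}} l_i \equiv 0 \bmod 2$, obtain the splitting $M_n \cong \Z_+^{\lfloor n/2 \rfloor} \oplus V$ with $\Z[V]$ the second Veronese subring, and read off parts (b)--(d) from Corollary~\ref{L:Veronese}, exactly as the paper does. Your extra remarks on the algebraic independence of the $\pi_i$ and the module-finiteness of $\Z[C_n]^{\We}$ over $\Z[\pi_1,\dots,\pi_n]$ simply make explicit what the paper leaves implicit.
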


\begin{proof}
The isomorphism $\Z[C_n]^{\We} \cong \Z[M_n]$, with the indicated monoid $M_n$, has been 
proved in the remarks preceding the statement of the theorem. For the decomposition
$M_n \cong \Z_+^{\lfloor \frac{n}{2} \rfloor} \oplus V$, note that 
\[
\sum_{i} il_i \equiv 0 \bmod 2 \iff \sum_{i \text{ odd}} l_i \equiv 0 \bmod 2
\]
The unrestricted components in even positions form the factor
$\Z_+^{\lfloor \frac{n}{2} \rfloor}$\,, the components in odd positions the factor $V$\,.
The decomposition $M_n \cong \Z_+^{\lfloor \frac{n}{2} \rfloor} \oplus V$ leads to an
algebra isomorphism 
\[
\Z[M_n] \cong \Z[\Z_+^{\lfloor \frac{n}{2} \rfloor}] \otimes \Z[V]
\]
where the first factor is a polynomial algebra in
$\lfloor \frac{n}{2} \rfloor$ variables over $\Z$ and $\Z[V]$ is the second Veronese subring
of the polynomial algebra $\Z[\Z_+^{\lceil \frac{n}{2} \rceil}] \cong \Z[t_1,\dots,t_{\lceil \frac{n}{2} \rceil}]$
as in Corollary~\ref{L:Veronese}. Equivalently, $\Z[M_n]$ is a 
polynomial algebra in $\lfloor \frac{n}{2} \rfloor$ variables over the Veronese subring.
This proves (a).

Now for the fundamental invariants in (b).
The $\sigma_i$ with $i$ even are the variables of the polynomial factor 
$\Z[\Z_+^{\lfloor \frac{n}{2} \rfloor}]$ above.
For the fundamental invariants of the Veronese factor $\Z[V]$, it suffices to quote
Corollary~\ref{L:Veronese}(a). This proves (b). The Hironaka decomposition 
and the defining relations in (c) and (d) are
immediate from Corollary~\ref{L:Veronese} as well.
\end{proof}

We remark that the generators $\pi_i$, $\gamma_{i,j}$ of the monoid $M_n$ exhibited 
in the proof of Theorem~\ref{T:Cn} above
are clearly indecomposable elements of $M_n$\,. Therefore, they form the \emph{Hilbert basis}
of $M_n$.

\subsection{Class group}
\label{SSS:clCn}

We have already pointed out that $\Z[C_2]^{\We}$ is a polynomial algebra over $\Z$,
giving  $\Cl(\Z[C_2]^{\We})=0$. Thus, we will assume that $n\ge 3$ below. 
Recall that $\Z[C_n]^{\We}$ is a polynomial ring in
$\lfloor \frac{n}{2} \rfloor$ variables over  the second Veronese subring
of a polynomial algebra in $\lceil \frac{n}{2} \rceil$ variables over $\Z$ by Theorem~\ref{T:Cn}(a).
It is a standard fact that $\Cl(R[x]) \cong \Cl(R)$ holds for any Krull domain $R$; see \cite[Theorem 8.1]{rF73}.
Thus, it would be possible to use the structure of the class groups of Veronese algebras  
(Corollary~\ref{L:Veronese}(d)) in order to calculate $\Cl(\Z[C_n]^{\We})$.
However, we will use Theorem~\ref{T:cl} instead. 

First we must find the subgroup $\cD$ consisting of all diagonalizable reflections in $\We$\,. 

\begin{lem}
\label{L:clCn}
If $n\ge 3$, then $\cD=\{1\}$. 
\end{lem}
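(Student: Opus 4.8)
The plan is to prove that $\We$ contains \emph{no} diagonalizable reflection on $C_n$, whence the subgroup $\cD$ generated by them is trivial. The first step is to list all reflections of $\We$ acting on $C_n$. Since $\We = \We(\rC_n)$ preserves the inner product on $\E$ and $C_n \otimes_\Z \R = \E$, an element $w \in \We$ satisfies $\rank(1-w) = 1$ on $C_n$ if and only if it is an orthogonal reflection of $\E$, i.e. $w = s_\alpha$ for some root $\alpha \in \RPhi$; see \eqref{E:CnPhi}. These fall into three families: the sign changes $d_i = s_{2\e_i}$ (with $\e_i \mapsto -\e_i$), the transpositions $\tau_{ij} = s_{\e_i - \e_j}$ (swapping $\e_i, \e_j$), and the signed transpositions $r_{ij} = s_{\e_i + \e_j}$ (with $\e_i \mapsto -\e_j$, $\e_j \mapsto -\e_i$), for $1 \le i < j \le n$. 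To each I apply the criterion recalled before Lemma~\ref{L:diag}: such an order-two $s$ is diagonalizable exactly when $H^1(\langle s \rangle, C_n) \cong \Z/2\Z$, and non-diagonalizable when this group vanishes.

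Next I would compute these $H^1$ directly from $H^1(\langle s\rangle, C_n) = \Ker(1+s)/\operatorname{im}(1-s)$. In all three cases $\Ker(1+s)$ is a rank-one sublattice spanned by the relevant direction — $2\Z\e_i$, $\Z(\e_i - \e_j)$, or $\Z(\e_i + \e_j)$ — and $\operatorname{im}(1-s)$ is contained in it; the whole question is whether the two coincide or differ by index two. For the sign changes the computation gives $\operatorname{im}(1-d_i) = 2\Z\e_i = \Ker(1+d_i)$ outright, so $H^1 = 0$; note that this already departs from the $\rB_n$ situation of \ref{SSS:diagBn}, purely because $C_n$ is the even sublattice of $B_n$. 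For $\tau_{ij}$ and $r_{ij}$ the image is $\{(a_i \mp a_j)(\e_i \mp \e_j)\}$, and coincidence with the kernel requires the integer $a_i \mp a_j$ to take odd as well as even values on $C_n$. This is exactly where $n \ge 3$ enters: with a third index $k \notin \{i,j\}$ available, one can adjust the coordinate $a_k$ to satisfy the parity constraint $\sum_l a_l \equiv 0 \bmod 2$ defining $C_n$, so $a_i \mp a_j$ is unconstrained and again $H^1 = 0$.

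Thus, for $n \ge 3$, every reflection of $\We$ is non-diagonalizable on $C_n$, and therefore $\cD = \{1\}$. The point worth emphasizing is the contrast with type $\rB_n$, where the $d_i$ are literally diagonal and force $\cD = \cD_n$: passing to the index-two sublattice $C_n \subset B_n$ alters the integral conjugacy type of these same automorphisms. I expect the only delicate step to be the surjectivity-onto-the-line check for $\tau_{ij}$ and $r_{ij}$, and it is instructive that it genuinely fails for $n = 2$, where no third coordinate exists: then $a_i \mp a_j$ is forced even, $H^1 \cong \Z/2\Z$, and one recovers diagonalizable reflections — consistent with $\rB_2 \cong \rC_2$ and $\cD \neq \{1\}$, and with the standing assumption $n \ge 3$.
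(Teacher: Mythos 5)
Your proof is correct, but it takes a genuinely different route from the paper's. The paper argues structurally first: by Lemma~\ref{L:diag}, $\cD$ is a normal elementary abelian $2$-subgroup of $\We = \cD_n \rtimes \Sy_n$, so its image in $\Sy_n$ could be nontrivial only for $n=4$ (inside the Klein $4$-group), a case it excludes by checking that no element $d\sigma$ with $\sigma$ a double transposition is a reflection; having reduced to $\cD \subseteq \cD_n$, it tests only the sign changes $d_i$ and shows $H^1(\langle d_i\rangle, C_n)=0$ via the long exact cohomology sequence of $0 \to C_n \to B_n \to \Z/2\Z \to 0$, playing the known answer $H^1(\langle d_i\rangle, B_n)\cong \Z/2\Z$ off against $H^1(\langle d_i\rangle, \Z/2\Z)$. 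You bypass that reduction entirely: you classify \emph{all} reflections of $\We$ on $C_n$ as the root reflections $s_\alpha$, $\alpha \in \RPhi$ (the families $d_i$, $\tau_{ij}$, $r_{ij}$), and compute $H^1(\langle s\rangle, C_n) = \Ker(1+s)/\operatorname{im}(1-s)$ directly in each family. Your computations check out: $\operatorname{im}(1-d_i)=2\Z\e_i=\Ker(1+d_i)\cap C_n$ holds already for $n\ge 2$, while for $\tau_{ij}$ and $r_{ij}$ the surjectivity of $v \mapsto a_i \mp a_j$ onto $\Z$ genuinely needs a third coordinate to absorb the parity constraint defining $C_n$ --- exactly where $n\ge 3$ enters, and your observation that this fails for $n=2$ (recovering diagonalizable reflections, consistent with $\rC_2 \cong \rB_2$) is a nice sanity check the paper does not make. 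What each approach buys: yours is more elementary and self-contained (no cohomology sequence, no casework on normal $2$-subgroups of $\Sy_n$) and transparently locates the role of the hypothesis $n \ge 3$; the paper's avoids classifying reflections in all of $\We$ and recycles the type-$\rB_n$ computation through the exact sequence, in keeping with its running strategy of bootstrapping from $\rB_n$. The one step you should make explicit in a write-up is the classification of reflections: cite the standard fact that every reflection in a finite reflection group is of the form $s_\alpha$ for a root $\alpha$, or verify by inspection in $\cD_n \rtimes \Sy_n$ that only $d_i$, $\tau_{ij}$, $r_{ij}$ satisfy $\rank(1-w)=1$.
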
 

\begin{proof}
Recall from Lemma~\ref{L:diag} that $\cD$ is an elementary abelian normal $2$-subgroup of 
the Weyl group $\We = \cD_n \rtimes \Sy_n$\,. We first claim that $\cD \subseteq \cD_n$\,.
Indeed, otherwise the image of $\cD$ in $\Sy_n$ would be a nontrivial elementary abelian 
normal $2$-subgroup of $\Sy_n$, which forces $n=4$ and the image to be 
contained in the Klein $4$-subgroup
of $\Sy_4$\,. However, it is easy to see that no element of the form $d\sigma \in \We$, with
$d \in \cD_4$ and $\sigma \in \Sy_4$ a product of two disjoint $2$-cycles, acts as a reflection
on $C_n$ (or, equivalently, on $B_n$). Therefore, we must have $\cD \subseteq \cD_n$ as claimed.
Recall further, from Section~\ref{SSS:diagBn}, that the only elements of $\cD_n$ that act as reflections
are the elements $d_i \in \cD_n$ with $d_i(\e_j) = \e_j$ for $i \neq j$ and $d_i(\e_i) = -\e_i$.
We will show that $H^1(\langle d_i\rangle,C_n)=0$; so none of these elements acts as a 
diagonalizable reflection on $C_n$ (even though they
do so on $B_n$). For this, we use the short exact sequence 
\eqref{E:CnSeq}. The associated long exact cohomology sequence gives an exact sequence of groups
\[
B_n^{\langle d_i\rangle} \tto \Z/2\Z \tto H^1(\langle d_i\rangle,C_n) 
\tto H^1(\langle d_i\rangle, B_n)\tto H^1(\langle d_i\rangle, \Z/2\Z)
\]
First, the map $B_n^{\langle d_i\rangle} \to \Z/2\Z$ is onto,
because any $\e_j$ $(j\neq i)$ belongs to $B_n^{\langle d_i\rangle}$ and has nontrivial image in 
$(\Z/2\Z)^{\langle d_i\rangle}  = \Z/2\Z$\,. Therefore, the above sequence becomes
\[
0 \tto H^1(\langle d_i\rangle,C_n) 
\tto H^1(\langle d_i\rangle, B_n)\tto H^1(\langle d_i\rangle, \Z/2\Z)
\]
Next, the group $H^1(\langle d_i\rangle, \Z/2\Z) \cong \Hom(\langle d_i\rangle,\Z/2\Z) \cong \Z/2\Z$
is generated by the map $h$ with $h(d_i) = \bar{1} \in \Z/2\Z$\,.
We also know that $H^1(\langle d_i\rangle,B_n) \cong \Z/2\Z$\,, 
since $d_i$ is a diagonalizable reflection on $B_n$\,. Explicitly,
\[
\begin{split}
H^1(\langle d_i\rangle,B_n) &= \Der(\langle d_i\rangle,B_n)/\Inn(\langle d_i\rangle,B_n) \\
&\cong \ann_{B_n}(d_i + 1)B_n/(d_i-1)B_n \\
&= \langle \e_i + 2\Z \e_i\rangle
\cong \Z/2\Z
\end{split}
\]
with generator the class of the derivation $\delta \in \Der(\langle d_i\rangle,B_n)$ that
is given by $\delta(d_i) = \e_i$\,.
The map $H^1(\langle d_i\rangle, B_n)\to H^1(\langle d_i\rangle, \Z/2\Z) \cong \Hom(\langle d_i\rangle,\Z/2\Z)$
sends $\delta$ to $h$; so $H^1(\langle d_i\rangle, B_n) \iso H^1(\langle d_i\rangle, \Z/2\Z)$\,.
The last exact sequence above therefore shows that $H^1(\langle d_i\rangle,C_n)=0$ as desired.
\end{proof}

Now Theorem~\ref{T:cl}(a) gives 
$\Cl(\Z[C_n]^{\We}) \cong H^1(\We, C_n) \cong \Lambda/C_n$\,,
where $\Lambda$ is the weight lattice of the root system of type $\rC_n$\,. 
The factor $\Lambda/C_n$ is known to be isomorphic to $\Z/2\Z$; see \cite[Planche III, (VIII)]{Bou}. 

\section{Type $\rD_n$}

The last classical root lattice is type $\rD_n$ with $n \ge 4$. Continuing with 
the same notation as used above, we will 
calculate the multiplicative invariants and their class group for the root lattice $D_n$
associated to this root system.

\subsection{Root system, root lattice and Weyl group}
\label{SSS:rootDn}

The set of roots is the subset of $\E = \R^n$ given by
\begin{equation}
\label{E:DnPhi}
\RPhi = \{ \pm \e_i\pm \e_j \mid 1 \le i <j \le n \} 
\end{equation}
This root system is contained in the root system of type $\rC_n$. The 
Weyl group for $\rD_n$ is a proper subgroup of $\We(\rC_n) = \cD_n \rtimes \Sy_n$:
\[
\We = \We(\rD_n) =\left( \cD_n \cap \SL_n(\Z) \right) \rtimes \Sy_n
\]
We remark that this root system is often considered for $n \ge 3$; see \cite[Planche IV]{Bou}.
However, for $n=3$, the root system is isomorphic to the root system of type $\rA_3$. The description
of $\We$ above becomes the standard description of $\We(\rA_3) = \Sy_4$ as the 
semidirect product of $\Sy_3$ with a Klein $4$-group. Therefore, the
material below is only new for $n \ge 4$.

The vectors $\alpha_i := \e_i-\e_{i+1}$ for $i=1,\dots, n-1$ and 
$\alpha_n=\e_{n-1} + \e_n$ form a base for the root system $\RPhi$, giving in particular
a $\Z$-basis for the root lattice $D_n$. Note that the 
$\alpha_i$ with $1 \le i \le n-1$ were also part of the $\Z$-basis of the root lattice $C_n$ considered in
Section~\ref{SSS:rootCn}. The last basis vectors, $\e_{n-1} + \e_n$ for $D_n$ and 
$2\e_n$ for $C_n$, are related by $2\e_n + \alpha_{n-1} = \e_{n-1} + \e_n$\,. 
Therefore, the root lattices $C_n$ and $D_n$ are identical.
Thus,  by \eqref{E:CnSeq} above, 
\begin{equation}
\label{E:DnRootLattice}
D_n = \Bigl\{ {\textstyle\sum_{i}} z_i\e_i \in \bigoplus_{i=1}^n \Z\e_i \mid
{\textstyle\sum_{i}} z_i \text{ is even} \Bigr\}
\end{equation}

\subsection{Multiplicative $\We$-invariants}
\label{SSS:multDn}

To calculate the multiplicative invariants for the root lattice $D_n$\,, we will use 
Theorem~\ref{T:monoid} and Proposition~\ref{P:Hilbert}.
The invariant algebra $\Z[D_n]^{\We}$ could also be calculated using a more 
elementary approach, similar to what we did for $C_n$ and $A_n$\,, 
but the algebraic structure as a monoid algebra would be difficult to obtain in this way. 
Recall that Theorem ~\ref{T:monoid} states that $\Z[D_n]^{\We}$ is isomorphic to the monoid algebra 
of the monoid $D_n \cap \RLambda_+$ with $\RLambda_+ = \bigoplus_{i=1}^n \Z_+\varpi_i$\,, 
the isomorphism $\Omega:\Z[D_n \cap \RLambda_+] \longiso \Z[D_n]^{\We}$ being given by
\begin{align*} 
\hspace{1.2in} 
\Z[D_n &\cap \RLambda_+] & &\longiso &  \Z&[D_n]^{\We} 
\hspace{1.2in}		\\
&\upin &&& &\upin  \\ 
\sum_{i=1}^n &l_i \varpi_i  & &\longmapsto & \prod_{i=1}^n &\osum(\varpi_i)^{l_i} 
\end{align*}
In the following theorem, we determine $D_n \cap \RLambda_+$ explicitly and use the isomorphism above to give the description of our invariant algebra.

As usual, we put $x_i = \x^{\e_i}$ and we let 
\[
\feqn{\sigma_i = \sum_{\substack{I \subseteq \{ 1,2,\dots,n\}\\ |I| = i}} \ \prod_{j \in I} (x_j + x_j^{-1})}
\]
denote the $i^{\text{th}}$ elementary symmetric function in the variables 
$x_1 + x_1^{-1},\dots,x_n + x_n^{-1}$ $(j = 1,2,\dots,n)$;
these elements are invariant under $\We(\rB_n)$, which contains $\We$\,.
As we will review in the proof of Theorem~\ref{T:Dn} below,
the weight lattice $\RLambda$ is not contained in $\bigoplus_{i=1}^n \Z\e_i$ but in
the larger sublattice $\bigoplus_{i=1}^n \Z\tfrac{1}{2}\e_i$ of the Euclidean space $\E$; 
so we will work in this setting as well.
The Weyl group $\We$ stabilizes both lattices.
We put $y_i = \x^{\tfrac{1}{2}\e_i}$ and
\[
\feqn{%
\tau_\pm = \sum_{\substack{(d_1,\dots,d_n) \in \{ \pm 1\}^n\\ \prod_i d_i = \pm 1
}} y_1^{d_1}y_2^{d_2}\cdots y_n^{d_n} 
}
\]
Note that $\tau_+$ is the $\We$-orbit sum of the lattice element 
$\tfrac{1}{2}(\e_1+\e_2+ \cdots +\e_n)$, because the $\We$-orbit of this element
consists of all $\tfrac{1}{2}(d_1\e_1+\cdots + d_n\e_n)$ with $d_i = \pm 1$ and $\prod_i d_i = 1$\,.
Similarly, $\tau_-$ is the $\We$-orbit sum of $\tfrac{1}{2}(\e_1+\e_2+ \cdots +\e_{n-1}-\e_n)$\,. Using this along with the notation in \ref{SS:monoid} and \ref{SS:Hilbert} we give the following theorem.

\begin{thm}
\label{T:Dn}
\begin{enumerate}
\item
\textbf{Monoid algebra structure:}
$\Z[D_n]^{\We}$ is isomorphic to the monoid algebra 
$\Z[M_n]$ with  
\[
M_n = \Big\{ (l_i) \in \Z_+^n \mid l_{n-1} + l_n \in 2\Z  \quad\text{and}\quad
\tfrac{l_{n-1} + l_{n}}{2}\,n + l_{n-1} +  \sum_{\substack{i \le n-2 \\ i \text { \rm odd}}} l_i  \in 2\Z \Big\}
\]
The isomorphism is given by
\begin{align*} 
\hspace{1.2in} 
 \Z[&M_n] & &\longiso &  \Z&[D_n]^{\We} 
\hspace{1.2in}		\\
&\upin &&& &\upin  \\ 
(l_1,l_2, &\dots, l_n)  & &\longmapsto & 
\sigma_1^{l_1}\cdots\, &\sigma_{n-2}^{l_{n-2}}\tau_-^{l_{n-1}}\tau_+^{l_n}
\end{align*}
The monoid $M_n$
decomposes as $M_n \cong \Z_+^{\lfloor \frac{n-2}{2} \rfloor} \oplus W$ with
\[
W = \Big\{ (k_i ) \in \Z_+^{\lceil \frac{n+2}{2} \rceil } 
\mid k_{1} + k_2 \in 2\Z  \quad\text{and}\quad
\tfrac{k_1+k_2}{2}\,n  +   \sum_{i \ge 2} k_i  \in 2\Z \Big\}
\]

\item
\textbf{Fundamental invariants for $n$ even:}
Put 
\[
\pi_i  = \begin{cases}  \sigma_i^2 &\text{for $i$ odd}\\
	\sigma_i & \text{for $i$ even} \end{cases} \quad (1 \le i \le n-2)
\qquad \text{and} \qquad
\pi_{n-1}= \tau_{-}^2\,,\  \pi_{n} =  \tau_+^2
\]
Moreover, put 
\[
\begin{aligned}
\gamma_i &= \sigma_i\tau_{-}\tau_+ &&\text{for $1\le i \le n-2$, $i$ odd} \\
\gamma_{i,j} &= \sigma_i\sigma_j &&\text{for $1\le i < j \le n-2$ both odd}
\end{aligned}
\]
The above $\frac{1}{8}(n^2 + 6n)$ elements generate the invariant algebra $\Z[D_n]^{\We}$, with
the $\pi_i$ serving as primary invariants. 

\item
\textbf{Fundamental invariants for $n$ odd:}
Put 
\[
\pi_i  = \begin{cases}  \sigma_i^2 &\text{for $i$ odd}\\
	\sigma_i & \text{for $i$ even} \end{cases} \quad (1 \le i \le n-2)
\qquad \text{and} \qquad
\pi_{n-1}= \tau_{-}^4\,,\  \pi_{n} =  \tau_+^4
\]
Moreover, put 
\[
\begin{aligned}
\gamma_{i,j} &= \sigma_i\sigma_j &&\text{for $1\le i < j \le n-2$ both odd}\\
\gamma_{n-1,n} &=\tau_{-}\tau_+  \\
\gamma_{i,n-1} &= \sigma_i\tau_{-}^2 &&\text{for $1\le i\le n-2$ odd}\\
\gamma_{i,n} &= \sigma_i\tau_+^2 &&\text{for $1\le i\le n-2$ odd}\\
\end{aligned}
\]
The above $\frac{1}{8}(n^2+12n+3)$ elements generate the invariant algebra $\Z[D_n]^{\We}$, with
the $\pi_i$ serving as primary invariants.  
\end{enumerate}
\end{thm}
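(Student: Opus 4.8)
The plan is to deduce the entire theorem from Theorem~\ref{T:monoid}, which identifies $\Z[D_n]^{\We}$ with the monoid algebra of $M = D_n \cap \RLambda_+$. First I would record the fundamental weights from \cite[Planche IV]{Bou}: $\varpi_i = \e_1 + \dots + \e_i$ for $i \le n-2$, together with the two half-integral ``spin'' weights $\varpi_{n-1} = \tfrac12(\e_1 + \dots + \e_{n-1} - \e_n)$ and $\varpi_n = \tfrac12(\e_1 + \dots + \e_n)$; the latter are exactly why $\RLambda$ must be read inside $\bigoplus_i \Z\tfrac12\e_i$. Computing the $\We$-orbit sums then gives $\osum(\varpi_i) = \sigma_i$ for $i \le n-2$ and $\osum(\varpi_{n-1}) = \tau_-$, $\osum(\varpi_n) = \tau_+$. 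The first of these uses the observation that for $i \le n-2$ the weight $\varpi_i$ has at least two vanishing coordinates, so the parity constraint defining $\cD_n \cap \SL_n(\Z)$ can always be absorbed by flipping signs in those slots; hence the $\We(\rD_n)$-orbit of $\varpi_i$ agrees with its $\We(\rB_n)$-orbit, whose orbit sum is $\sigma_i$. Feeding these into $\Omega$ yields the displayed isomorphism of part~(a).

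To pin down $M_n$ itself, I would expand $\sum_i l_i\varpi_i$ in the basis $\{\e_k\}$ and impose membership in $D_n$. Integrality of every $\e$-coefficient forces $l_{n-1} + l_n \in 2\Z$, and then the even-coordinate-sum condition of \eqref{E:DnRootLattice} becomes, after collecting terms modulo $2$, the congruence $\tfrac{l_{n-1}+l_n}{2}\,n + l_{n-1} + \sum_{i \text{ odd}} l_i \in 2\Z$. This is exactly the pair of conditions defining $M_n$. Separating off the even-indexed coordinates $l_i$ ($i \le n-2$), which enter neither congruence and are therefore free, splits $M_n$ as $\Z_+^{\lfloor (n-2)/2\rfloor} \oplus W$; matching the remaining congruences against the stated description of $W$ only requires the identity $l_{n-1} \equiv l_n \pmod 2$. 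This completes part~(a).

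For the primary invariants in (b) and (c) I would invoke Proposition~\ref{P:Hilbert}(c), which reduces everything to the orders $z_i$ of the $\varpi_i$ modulo $D_n$. A direct check gives $z_i = 2$ for odd $i \le n-2$, $z_i = 1$ for even $i \le n-2$, and $z_{n-1} = z_n = 2$ or $4$ according as $2\varpi_n = \e_1 + \dots + \e_n$ (coordinate sum $n$) does or does not already lie in $D_n$, i.e.\ according as $n$ is even or odd. The resulting $\mu_i = \osum(\varpi_i)^{z_i}$ are precisely the listed $\pi_i$, and Proposition~\ref{P:Hilbert} already supplies their algebraic independence and the module-finiteness of $\Z[D_n]^{\We}$ over $\Z[\pi_1,\dots,\pi_n]$. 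It then remains to prove that the $\pi_i$ together with the $\gamma$'s generate the monoid $M_n$. Here the strategy is a reduction algorithm: subtracting copies of the primary generators $2\varpi_i$ (odd $i\le n-2$) and of $2\varpi_{n-1},2\varpi_n$ when $n$ is even, resp.\ $4\varpi_{n-1},4\varpi_n$ when $n$ is odd, I can drive every odd-indexed coordinate into $\{0,1\}$ and $l_{n-1},l_n$ into a bounded range while preserving both congruences. The residual vector then sits in a small finite box and must be written as a sum of the $\gamma$'s alone, since each primary generator already overshoots the box.

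The hard part will be this last step for $n$ odd, where the second congruence is genuinely nonlinear in the residues because of the term $\tfrac{l_{n-1}+l_n}{2}$. I anticipate a short case analysis over the eight admissible reduced pairs $(l_{n-1},l_n) \in \{0,1,2,3\}^2$ with $l_{n-1}+l_n$ even: in each case the congruence dictates the parity of the number $q$ of odd-indexed $1$'s, and one checks that $\gamma_{n-1,n} = \tau_-\tau_+$, $\gamma_{i,n-1} = \sigma_i\tau_-^2$ and $\gamma_{i,n} = \sigma_i\tau_+^2$ absorb the $(l_{n-1},l_n)$-part while $\gamma_{i,j} = \sigma_i\sigma_j$ pairs off the remaining $1$'s. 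For $n$ even the analogous step collapses to the single observation that $l_{n-1} = l_n \in \{0,1\}$ after reduction, with a leftover odd-indexed $1$ (when present) soaked up by $\gamma_i = \sigma_i\tau_-\tau_+$. Finally, the element counts $\tfrac18(n^2+6n)$ and $\tfrac18(n^2+12n+3)$ are a routine tally of the $n$ primaries against the number of $\gamma$'s, and one can additionally verify that none of the listed generators decomposes, so that they form the Hilbert basis of $M_n$.
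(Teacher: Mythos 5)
Your proposal is correct and follows essentially the same route as the paper: Theorem~\ref{T:monoid} plus the Bourbaki fundamental weights to get the two congruences defining $M_n$ and the orbit-sum identifications $\osum(\varpi_i)=\sigma_i$, $\osum(\varpi_{n-1})=\tau_-$, $\osum(\varpi_n)=\tau_+$, and then the $K^\circ$-box method of Proposition~\ref{P:Hilbert} (your ``reduction algorithm'' is exactly the decomposition $m=m'+m''$ with $m''\in K^\circ\cap L$ used there), with the same $z_i$ values and the same parity case analysis over $(l_{n-1},l_n)$, including the $(1,3)$/$(3,1)$ decomposition via $\varpi_{n-1}+\varpi_n$ for $n$ odd. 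The only cosmetic differences are that you justify $\osum(\varpi_i)=\sigma_i$ via the spare zero coordinates absorbing an odd sign flip, and you cite Proposition~\ref{P:Hilbert}(c) for the primary-invariant property where the paper additionally exhibits the relations $\gamma_{i,j}^2=\pi_i\pi_j$ and $\gamma_i^2=\pi_i\pi_{n-1}\pi_n$.
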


\begin{proof}
(a)
We need to describe the submonoid $D_n \cap \RLambda_+$ of 
$\RLambda_+ = \bigoplus_{i=1}^n \Z_+ \varpi_i$\,.
By \cite[Planche IV]{Bou} the root system of type $\rD_n$ has the following fundamental weights 
with respect to the base $\{ \alpha_i\}_1^n$ of the root system that was exhibited in
Section~\ref{SSS:rootDn}:
\begin{equation*}
\small
\begin{aligned}
\varpi_i &= \e_1+\e_2+ \cdots + \e_i \qquad (1\le i\le n-2) \\
&= \alpha_1+2\alpha_2+ \cdots + (i-1)\alpha_{i-1} + i(\alpha_i+\alpha_{i+1}+\cdots + \alpha_{n-2}) + \tfrac{1}{2}i(\alpha_{n-1}+\alpha_n) \\
\varpi_{n-1}&=\tfrac{1}{2}(\e_1+\e_2 +\cdots + \e_{n-2} +\e_{n-1}-\e_n) \\
&=\tfrac{1}{2}(\alpha_1 + 2\alpha_2 + \cdots + (n-2)\alpha_{n-2} + \tfrac{1}{2}n\alpha_{n-1} + \tfrac{1}{2}(n-2)\alpha_n )\\
\varpi_n &=\tfrac{1}{2}(\e_1+\e_2+\cdots + \e_{n-2}+\e_{n-1}+\e_n) \\
&= \tfrac{1}{2}(\alpha_1 + 2\alpha_2 + \cdots + (n-2)\alpha_{n-2} + \tfrac{1}{2}(n-2)\alpha_{n-1} + \tfrac{1}{2}n\alpha_n )
\end{aligned}
\end{equation*}
In view of \eqref{E:DnRootLattice},
an element $\sum_i l_i\varpi_i \in \RLambda_+$ $(l_i \in \Z_+)$ belongs to 
$D_n$ if and only if the following
two conditions are satisfied:
\begin{equation}
\label{E:DnCond1}
l_{n-1} + l_{n} \in 2\Z
\end{equation}
and 
\begin{equation}
\label{E:DnCond2}
\sum_{i=1}^{n-2} i l_i + \tfrac{1}{2}(n-2)l_{n-1} 
+ \tfrac{1}{2}n l_{n} \in 2\Z
\end{equation}
Indeed, \eqref{E:DnCond1} is equivalent to the condition 
$\sum_i l_i\varpi_i \in \bigoplus_{i=1}^n \Z\e_i$\,, while
\eqref{E:DnCond2} expresses the defining condition that $\sum_{i} z_i$ 
must be even in \eqref{E:DnRootLattice}.
Observe further that \eqref{E:DnCond2} can be rewritten as follows:
\begin{equation}
\label{E:DnCond3}
\tfrac{l_{n-1} + l_{n}}{2}\,n + l_{n-1} +  \sum_{\substack{i \le n-2 \\ i \text { odd}}} l_i  \in 2\Z
\end{equation}
This yields the monoid $M_n$ as well as the isomorphism 
\begin{align*} 
\hspace{1.2in} 
 &M_n & &\longiso &  D_n & \cap \RLambda_+ 
\hspace{1.2in}		\\
&\upin &&& &\upin  \\ 
(l_1,l_2, &\dots, l_n)  & &\longmapsto &  \sum_{i=1}^n &l_i\varpi_i
\end{align*}
The decomposition 
$M_n \cong \Z_+^{\lfloor \frac{n-2}{2} \rfloor} \oplus W$ is clear, because 
\eqref{E:DnCond2} imposes no condition on the 
$\lfloor \frac{n-2}{2} \rfloor$ components $l_i$ for even $i \le n-2$\,.
In the description of $W$, we have also relabeled $l_n$ as $k_1$, $l_{n-1}$ as $k_2$ etc. 

To justify the indicated isomorphism $\Z[M_n] \longiso \Z[D_n]^{\We}$, we need to determine
the $\We$-orbit sums $\osum(\varpi_{i})$\,. The $\We$-orbit of $\varpi_i = \e_1+\e_2+ \cdots + \e_i$
with $1\le i\le n-2$ consists of all possible $\pm \e_{j_1} \pm \e_{j_2} \pm  \cdots \pm \e_{j_i}$
with $j_1 < \dots < j_i$\,. Therefore, the corresponding orbit sum evaluates to 
\[
\osum(\varpi_{i}) = \sigma_i
\]
Finally, we have already pointed out before the statement of the theorem that
\[
\osum(\varpi_n) = \tau_+ \qquad \text{and} \qquad
\osum(\varpi_{n-1}) = \tau_-
\]
This completes the proof of (a).

\medskip

To obtain fundamental invariants for $\Z[D_n]^{\We}$, we determine the Hilbert basis for 
our monoid $M_n \cong D_n\cap \RLambda_+$
using the procedure described in Section~\ref{SS:Hilbert}.
As in that section, we put 
\[
m_i=z_i\varpi_i \qquad (i=1,2,\dots,n)
\]
where $z_i$ is the order of $\varpi_i$ modulo $D_n$. 
It is easy to find the $z_i$ for the fundamental weights $\varpi_i$: 
\begin{align}
\label{E:zi1}
z_i&= \begin{cases} 1 & \text{if $i$ is even} \\ 2 & \text{if $i$ is odd} \end{cases} 
\qquad (1\le i \le n-2) 
\\
\intertext{and}
\label{E:zi2}
z_{n-1} = z_n &= \begin{cases} 2 & \text{if $n$ is even} \\ 4 & \text{if $n$ is odd} \end{cases}
\end{align}
This gives the elements $m_i = z_i\varpi_i$ $(i=1,\dots,n)$ in the Hilbert basis, which by
(a) yield the fundamental invariants
\[
\pi_i = \osum(\varpi_i)^{z_i}
\]
in both (b) and (c).

As we have seen in Section~\ref{SS:Hilbert}, the remaining elements of the Hilbert basis of 
$D_n\cap \RLambda_+$ all belong to $D_n \cap K^\circ$, where
$K^\circ =\Bigl\{ \sum_{i=1}^n t_i m_i \in \E \mid 0 \le t_i < 1\Bigr\}$.
So we are looking for indecomposable elements of $M$ having the form
\[
(l_1,\dots,l_n) = (t_1z_1,\dots,t_nz_n) \in \Z_+^n \quad \text{with } 0 \le t_i < 1
\]
and such that conditions
\eqref{E:DnCond1} and \eqref{E:DnCond3} are satisfied.
Since each $l_i\in \Z_+$\,,  equations \eqref{E:zi1} and \eqref{E:zi2}
yield the following restrictions on what $l_i$ can be:
\begin{align}
\label{E:li1}
l_i& = \begin{cases} 0 & \text{if $i$ is even} \\ 0 \text{ or } 1 & \text{if $i$ is odd} 
\end{cases} \qquad (1\le i \le n-2)\\
\intertext{and}
\label{E:li2}
l_{n-1},l_n & \in \begin{cases} \{0,1,2,3\} & \text{if $n$ is odd} \\ 
\{0,1\} & \text{if $n$ is even} \end{cases}
\end{align}

First, suppose that $l_{n-1}=l_n=0$. Then \eqref{E:DnCond1} certainly holds and
condition  \eqref{E:DnCond3} becomes  
$\displaystyle \sum_{\substack{i \le n-2 \\ i \text { odd}}} l_i \in 2\Z$. 
Since the $l_i$ in this sum are either $0$ or $1$ by \eqref{E:li1}, 
condition  \eqref{E:DnCond3} just says that there must be 
an even number of  $l_i = 1$ for odd  $i\le n-2$\,.
The corresponding indecomposable elements of $M_n$ are obtained by taking just two
of these $l_i = 1$\,. In sum, we have obtained the following indecomposable elements
of $D_n\cap \RLambda_+$:
\[
m_{i,j} = \varpi_i + \varpi_j  \qquad (1 \le i < j \le n-2 \text{ and } i,j \text{ both odd})
\]
The element $m_{i,j}$ yields the fundamental invariants 
$\gamma_{i,j} = \sigma_i\sigma_j = \osum(\varpi_i)\osum(\varpi_i)$ in (b) and (c).
Note that
\begin{equation}
\label{E:DnReln1}
\gamma_{i,j}^2 = \pi_i\pi_{j}
\end{equation}

From now on, we assume that $l_{n-1},l_n$ are not both zero. For this we start with 

\bigskip

\emph{Case 1: $n$ is even}.
Since $l_{n-1}+l_n \in 2\Z$ by \eqref{E:DnCond1}, condition 
\eqref{E:li2} says we must have $l_{n-1}=l_n=1$. 
Now \eqref{E:DnCond3} becomes
\[
n+1 + \displaystyle \sum_{\substack{i \le n-2 \\ i \text { odd}}} l_i \in 2\Z 
\]
with all $l_i \in \{0,1\}$ by \eqref{E:li1}.
To satisfy the above condition we must have an odd number of $l_i = 1$. 
Taking exactly one nonzero $l_i=1$ gives the remaining indecomposable elements for $M_n$: 
\[
b_i= \varpi_i + \varpi_{n-1}+\varpi_n
\]
Indeed, any $n$-tuple $(l_1,\dots, l_{n-2},1,1) \in M_n$ with $2k+1$ of the $l_i = 1$ 
may be written as a sum $(l_1,\dots, l_{n-2},1,1) = (l'_1,\dots, l'_{n-2},0, 0)
+ (l_1,\dots, l_{n-2}, 1,1)$ with $2k$ of the $l'_i = 1$ and exactly one $l_j= 1$ $(j \neq i)$. 
Since the second summand corresponds to $b_j$ and the first one can be written in terms of the 
Hilbert basis elements $m_{r,s}$ constructed earlier, we have found the complete Hilbert basis of
$M_n$. By isomorphism in (a), the basis element $b_i$ yields the fundamental invariant 
$\gamma_i=\sigma_i\tau_{-}\tau_+$ 
giving a total of $\frac{1}{8}(n^2 + 6n)$  basis elements for our monoid, and hence
fundamental invariants, when $n$ is even. Note that
\begin{equation}
\label{E:DnReln2}
\gamma_i^2 = \pi_i\pi_{n-1}\pi_n
\end{equation}
Together with \eqref{E:DnReln1}, this relation shows that the invariant algebra 
$\Z[D_n]^{\We}$ is integral over the subalgebra $\Z[\pi_1,\dots,\pi_n]$; so the 
$\pi_i$ form a set of primary invariants.

\bigskip

\emph{Case 2: $n$ is odd}.
By \eqref{E:li2}, if either $l_{n-1}$ or $l_n$ are zero, the other must be $2$ to satisfy 
\eqref{E:DnCond1}. This leads to the possibilities  $(l_{n-1},l_n)=(2,0)$ or $(l_{n-1},l_n)=(0,2)$.
In either case \eqref{E:DnCond3} becomes
\[
n + 2 + \displaystyle \sum_{\substack{i \le n-2 \\ i \text { odd}}} l_i \in 2\Z
\]
with all $l_i \in \{0,1\}$ by \eqref{E:li1}.
Therefore, we must have an odd number of $l_i =1$. As above, indecomposable
monoid elements are obtained by taking one $l_i=1$ and all others zero. So we have 
indecomposable elements 
\[
m_{i,n-1}=\varpi_i + 2\varpi_{n-1}
\qquad \text{and} \qquad 
m_{i,n}=\varpi_i + 2\varpi_{n}
\]
giving fundamental invariants $\gamma_{i,n-1}=\sigma_i\tau_{-}^2$ and $\gamma_{i,n}=\sigma_i\tau_+^2$. 
Now assume that neither $l_{n-1}$ nor $l_n$ are zero; so $l_{n-1}$ and $l_n$ belong to $\{1,2,3\}$
by \eqref{E:li2} and their sum must be even by \eqref{E:DnCond1}.
First assume that $l_{n-1}=l_n$. Then \eqref{E:DnCond3} becomes
\[
l_n(n+1)+ \displaystyle \sum_{\substack{i \le n-2 \\ i \text { odd}}} l_i \in 2\Z
\]
Since $n+1$ is even, this amounts to the sum
$\sum l_i$ being even. The only indecomposable element of $M_n$ resulting from this situation
is obtained by letting all $l_i =0$ for $1\le i\le n-2$ and $\l_{n-1} = l_n = 1$, which gives the Hilbert
basis element 
\[
m_{n-1,n}=\varpi_{n-1}+\varpi_n
\]
and the corresponding fundamental invariant, $\gamma_{n-1,n}=\tau_{-}\tau_+$. 

Finally, if we allow one of $l_{n-1},l_n$ to be $1$ and the other $3$, 
then \eqref{E:DnCond3} gives one of the two equations:
\[
\begin{aligned}
2n+1 + \sum_{\substack{i \le n-2 \\ i \text { odd}}} l_i \in 2\Z \\ 
2n+3 + \sum_{\substack{i \le n-2 \\ i \text { odd}}} l_i \in 2\Z 
\end{aligned}
\]
In either case, we must have an odd number of nonzero $l_i=1$ in the sum
to satisfy this condition. It follows that we can write this as $m+m'$ where $m$ has an odd 
number of $l_i=1$ and $l_{n-1}=2$, (or $l_n=2$, depending on which we took to be $3$ above), 
and $m'$ is the indecomposable element $(0,\dots, 0, 1, 1)$. 
This completes our Hilbert basis when $n$ is odd, giving a total of $\frac{1}{8}(n^2+12n+3)$ 
indecomposable elements. Again, we see that the squares of the fundamental 
invariants $\gamma_{i,j}, \gamma_{i,n-1}$ and $\gamma_{i,n}$ with $i \le n-2$ as well as the
fourth power of $\gamma_{n-1,n}$ belong to the subalgebra $\Z[\pi_1,\dots,\pi_n]$ of
$\Z[D_n]^{\We}$; so the $\pi_i$ form a system of primary invariants.
This completes the proof.
\end{proof}

We remark that one can also use the description in Proposition ~\ref{P:Hilbert} to write 
down a Hironaka decomposition for $\Z[D_n]^{\We}$, though it is not particularly nice or 
compact so we omit it here.

\subsection{Class Group}
\label{SSS:clDn}

We will use Theorem ~\ref{T:cl} to calculate $\Cl(\Z[D_n]^{\We})$ for $n \ge 4$. To this end,
we must first find $\cD$, the group of diagonalizable reflections in $\We$.
But, as we noted earlier, $D_n$ and $C_n$ are the same lattice and $\We(\rD_n)$ is a subgroup
of $\We(\rC_n)$. Since there are no nonidentity elements of $\We(\rC_n)$ that act as a 
diagonalizable reflection on $D_n = C_n$ by 
Lemma~\ref{L:clCn} , we conclude that $\cD=\{1\}$ for $D_n$ as well.
Now Theorem~\ref{T:cl}(a)(b) gives 
\[
\Cl(\Z[D_n]^{\We}) \cong H^1(\We, C_n) \cong \Lambda/D_n
\]
where $\Lambda$ is the weight lattice of the root system of type $\rD_n$\,. 
The factor $\Lambda/D_n$ is known to be isomorphic to $\Z/2\Z\times \Z/2\Z$ for $n$ even and 
$\Z/4\Z$ for $n$ odd; see \cite[Planche III, (VIII)]{Bou}.

\section{Type $\rE_6$}

Turning now to exceptional root systems, we start with the multiplicative invariant
algebras of the root lattice $\rE_6$. 
Because the Weyl groups in question are very large, we will use 
Theorem~\ref{T:monoid} and Proposition~\ref{P:Hilbert} to calculate the multiplicative invariant algebras
for $\rE_6$ and $\rE_7$.
Throughout, we follow the notations of \cite[Planches V and VI]{Bou}.

\label{SSS:E6}

\subsection{Root system, root lattice and Weyl group}

Here, $\E$ is the subspace of $\R^8$ that is orthogonal to $\e_6 - \e_7$ and to
$\e_7 + \e_8$. The set of roots is the subset of $\E$ that is given by
\begin{multline}
\label{E:E6Phi}
\RPhi=\{ \pm \e_i \pm \e_j \mid 1\le i < j \le 5\} 
\\
\cup \ \{\pm \tfrac{1}{2} (\e_8-\e_7-\e_6 + \sum_{i=1}^5 (-1)^{\nu(i)}\e_i) \mid \sum_{i=1}^5 \nu(i) \in 2\Z\}
\end{multline}
Here $\nu(i)\in \{0,1\}$. Thus, there are $4\cdot \binom{5}{2} + 2^5 = 72$ roots. A base for this root system is given by
\[
\begin{aligned}
\alpha_1 &= \tfrac{1}{2}(\e_1+\e_8)-\tfrac{1}{2}(\e_2+\e_3+\e_4+\e_5+\e_6+\e_7) \\
\alpha_2 &= \e_1+\e_2\\
\alpha_i &= \e_{i-1}-\e_{i-2} \qquad (i=3,\dots,6)\\
\end{aligned}
\]
The root lattice of $\rE_6$ will be denoted by $E_6$; so $E_6=\bigoplus_{i=1}^6 \Z \alpha_i$.

The Weyl group $\We = \We(\rE_6)$ has order $2^7\,3^4\,5 = 51\,840$. The group $\We$ has a
number of interesting realizations; see Bourbaki \cite[Exercise 2 on page 228]{Bou} and
Humphreys \cite[Section 2.12]{Hum}.
For example, $\We$ can be described as the automorphism group of the famous configuration
of $27$ lines on a cubic surface. The rotation subgroup $\We^+ = \{ w \in \We \mid \det w = 1\}$
is isomorphic to the projective symplectic group $\PSp_4(3)$ 
over $\mathbb{F}_3$; the latter group is the unique simple group of order $25\,920$.
See the \emph{Atlas of Finite Groups} \cite{Atlas}.
Denoting the standard inner product of $\R^8$ by $(\,.\,,\,.\,)$ as usual, the quadratic form
$\frac{1}{2}(x,x)$ yields a non-degenerate quadratic form on the $6$-dimensional 
$\mathbb{F}_2$-vector space $\overline{E}_6 = E_6/2E_6$. The action of $\We$ on $\overline{E}_6$
preserves this form, and this induces an isomorphism
\begin{equation}
\label{E:WE6}
\feqn{\We \cong O_6(2)}
\end{equation}
in the notation of \cite{Atlas}.

\subsection{Multiplicative $\We$-invariants}

By Theorem~\ref{T:monoid} we know that 
\[
\Z[E_6]^{\We}\cong \Z[\RLambda_+ \cap E_6]
\]
where $\RLambda_+ = \bigoplus_{i=1}^6 \Z_+\varpi_i$ and $\varpi_i$ are the fundamental 
weights with respect to the above base $\{ \alpha_i \}_1^6$ of the root system.
Explicitly,
\[
\begin{aligned}
\varpi_1&= \tfrac{2}{3}(\e_8-\e_7-\e_6)
= \tfrac{1}{3}(4\alpha_1+ 3\alpha_2 + 5\alpha_3+6\alpha_4+4\alpha_5 + 2\alpha_6)\\
\varpi_2&= \tfrac{1}{2}(\e_1+\e_2+\e_3+\e_4+\e_5-\e_6-\e_7+\e_8)\\
&= \alpha_1+2\alpha_2+2\alpha_3+3\alpha_4+2\alpha_5+\alpha_6 \\
\varpi_3&= \tfrac{5}{6}(\e_8-\e_7-\e_6) + \tfrac{1}{2}(-\e_1+\e_2+\e_3+\e_4+\e_5) \\
&=\tfrac{1}{3}(5\alpha_1 + 6\alpha_2+10\alpha_3+12\alpha_4+8\alpha_5+4\alpha_6) \\
\varpi_4&= \e_3+\e_4+\e_5-\e_6-\e_7+\e_8\\
&= 2\alpha_1 + 3\alpha_2+4\alpha_3+6\alpha_4+4\alpha_5+2\alpha_6\\
\varpi_5&= \tfrac{2}{3}(\e_8-\e_7-\e_6) + \e_4+\e_5 \\
&= \tfrac{1}{3}(4\alpha_1+6\alpha_2+8\alpha_3+12\alpha_4+10\alpha_5+5\alpha_6)\\
\varpi_6&= \tfrac{1}{3}(\e_8-\e_7-\e_6) +\e_5 \\
&= \tfrac{1}{3}(2\alpha_1 + 3\alpha_2+4\alpha_3+6\alpha_4+5\alpha_5+4\alpha_6)
\end{aligned}
\]
In order to further describe the invariant algebra $\Z[E_6]^{\We}$, we need to analyze
the monoid
\[
M = \RLambda_+ \cap E_6
\]
\bigskip

\noindent
\textbf{Hilbert basis of $M$}.
Let $z\in M$ and write $z=l_1\varpi_1 +\cdots + l_6\varpi_6$ with $l_i\in \Z_+$.
Then we know that the coefficients of each $\alpha_i$ in our base must be integral. 
This gives the following conditions:
\[
\begin{aligned}
\text{coefficient of } &\alpha_1:\qquad & &\tfrac{4}{3}l_1+ l_2+\tfrac{5}{3}l_3+2l_4+\tfrac{4}{3}l_5+\tfrac{2}{3} l_6 \in \Z\\
&\alpha_2:& &l_1+2l_2+2l_3+3l_4+2l_5+l_6\in \Z\\
&\alpha_3:& &\tfrac{5}{3}l_1+ 2l_2+\tfrac{10}{3}l_3+4l_4+\tfrac{8}{3}l_5+\tfrac{4}{3} l_6 \in \Z\\
&\alpha_4:& &2l_1+ 3l_2+4l_3+6l_4+4l_5+2 l_6 \in \Z\\
&\alpha_5: & &\tfrac{4}{3}l_1+ 2l_2+\tfrac{8}{3}l_3+4l_4+\tfrac{10}{3}l_5+\tfrac{5}{3} l_6 \in \Z\\
&\alpha_6: & &\tfrac{2}{3}l_1+ l_2+\tfrac{4}{3}l_3+2l_4+\tfrac{5}{3}l_5+\tfrac{4}{3} l_6 \in \Z\\
\end{aligned}
\]
Note that the conditions at $\alpha_2$ and $\alpha_4$ are automatically satisfied 
for any $(l_1,\dots, l_6)\in \Z_+^6$, and the remaining conditions
do not impose any restrictions on $l_2$ and $l_4$. 
Therefore, our system of conditions can be rewritten as follows:
\[
\begin{aligned}
&4l_1+5l_3+4l_5+2l_6 \in 3\Z \hspace{.5in} & &5l_1+10l_3+8l_5+4l_6 \in 3\Z \\
&4l_1+8l_3+10l_5+5l_6 \in 3\Z & &2l_1+4l_3+5l_5+4l_6 \in 3\Z 
\end{aligned}
\]
Finally, reducing mod 3 we see that all four conditions are equivalent to the single condition
$l_1+2l_3+l_5+2l_6\in 3\Z$. Thus,
we are left with the following description of our monoid:
\[
M = \RLambda_+ \cap E_6 \cong \{(l_1,\dots, l_6)\in \Z_+^6 \mid l_1+2l_3+l_5+2l_6\in 3\Z\}
\cong \Z_+^2 \oplus W
\]
where 
\[
W=\{(k_1,k_2,k_3,k_4)\in \Z_+^4 \mid k_1+2k_2+k_3+2k_4\in 3\Z\}
\]
A Hilbert basis for the monoid $W$ can be obtained by using using the method described above in 
Proposition~\ref{P:Hilbert} or by 
simply using CoCoA as in Section~\ref{SS:computationsAn}. To carry out the latter approach,
we will use the following description of the monoid $W$:
\[
W 
= \Big\{(k_1,k_2,k_3,k_4, x) \in \Z_+^5 \mid k_1+2k_2+k_3+2k_4 - 3x =0 \Big\}
\]
Thus, $W$ is the kernel in $\Z_+^5$ of the matrix $A=[1,2,1,2, -3]$, which can be obtained 
with CoCoA as follows: 

\bigskip

\begin{verbatim}
A:=Mat([[1,2,1,2,-3]]);
HilbertBasisKer(A);

[[0, 0, 1, 1, 1], [1, 0, 0, 1, 1], [0, 1, 1, 0, 1], 
[1, 1, 0, 0, 1], [0, 0, 3, 0, 1], [1, 0, 2, 0, 1], 
[2, 0, 1, 0, 1], [3, 0, 0, 0, 1], [0, 0, 0, 3, 2], 
[0, 1, 0, 2, 2], [0, 2, 0, 1, 2], [0, 3, 0, 0, 2]]
\end{verbatim}

\bigskip

\noindent
Deleting the auxiliary fifth coordinate and reordering the above CoCoA output, we see that
the Hilbert basis of $W$ is given by the following elements:
\begin{equation}
\label{E:HilbE6.1}
\begin{aligned}
m_1&= (3,0,0,0) & 
m_2&= (0,3,0,0) &
m_3&= (0,0,3,0) &
m_4&= (0,0,0,3) \\
m_5&= (1,1,0,0) &
m_6&= (1,0,0,1) &
m_7&= (0,1,1,0) &
m_{8} &= (0,0,1,1) \\
m_{9} &= (1,0,2,0) &
m_{10} &= (0,1,0,2) &
m_{11} &= (0,2,0,1) &
m_{12} &= (2,0,1,0)
\end{aligned}
\end{equation}
In order to obtain the Hilbert basis for $M$, we need to convert each of the above $4$-tuples
$(k_1,\dots, k_4)\in \Z_+^4$ into the a $6$-tuple $(k_1,0,k_2,0,k_3,k_4)$, and we also need to 
add the $6$-tuples 
\begin{equation}
\label{E:HilbE6.2}
(0,1,0,0,0,0) \qquad \text{and} \qquad (0,0,0,1,0,0)
\end{equation}
to the list.

\bigskip

\noindent
\textbf{Fundamental invariants}.
By Theorem~\ref{T:monoid}, the isomorphism $\Z[M] \iso \Z[E_6]^{\We}$ is given by 
$(l_1,\dots, l_6) \mapsto  \prod_{i=1}^6 \osum(\varpi_i)^{l_i}$. In view of the preceding
paragraph, 
\[
\Z[M] \cong \Z[W] \otimes \Z[t_1,t_2]
\]
where the variables $t_1$ and $t_2$
correspond to the Hilbert basis elements in \eqref{E:HilbE6.2}. These give the 
following two fundamental invariants that are algebraically independent from each other and all 
other fundamental invariants:
\begin{equation*}
\osum(\varpi_2) \qquad \text{and} \qquad \osum(\varpi_4)
\end{equation*}
The remaining Hilbert basis elements $(k_1,k_2,k_3,k_4)$ from \eqref{E:HilbE6.1} give 12
additional fundamental invariants
\[
\osum(\varpi_1)^{k_1}\osum(\varpi_3)^{k_2}\osum(\varpi_5)^{k_3}\osum(\varpi_6)^{k_4}
\]
In view of the size of the Weyl group $\We$, it is not feasible or useful to  explicitly write out these orbit sums.

\bigskip

\noindent
\textbf{Relations}.
Relations between the 12 fundamental invariants 
coming from \eqref{E:HilbE6.1}, and hence a presentation
of the invariant algebra $\Z[E_6]^{\We}$, can be obtained by the method of
Section~\ref{SS:computationsAn} for $n=4$. In detail, we need 12 variables, one for each of the 
Hilbert basis elements $m_i$, and 4 extra variables for the embedding $M \into \Z_+^4$.
Here is the \textsc{Magma} computation:

\begin{verbatim}
> Z := IntegerRing();
> S<a,b,c,d,x1,x2,x3,x4,x5,x6,x7,x8,x9,x10,x11,x12>:= 
      PolynomialRing(Z,16);
> I:=ideal<S|x1-a^3,x2-b^3,x3-c^3,x4-d^3,x5-a*b,x6-a*d,
      x7-b*c,x8-c*d,x9-a*c^2,x10-b*d^2,x11-b^2*d,x12-a^2*c>;
> EliminationIdeal(I,4);
Ideal of Polynomial ring of rank 16 over Integer Ring
Order: Lexicographical
Variables: a, b, c, d, x1, x2, x3, x4, x5, x6, x7, x8, x9, 
      x10, x11, x12
Basis:
[
    x7*x10 - x8*x11,
    x6*x9 - x8*x12,
    x5*x10 - x6*x11,
    x5*x9 - x7*x12,
    x5*x8 - x6*x7,
    x6*x7*x8 - x9*x10,
    x6*x7^2 - x9*x11,
    x6^2*x7 - x10*x12,
    x5*x6*x7 - x11*x12,
    x4*x12 - x6^2*x8,
    x4*x11 - x10^2,
    x4*x9 - x6*x8^2,
    x4*x7 - x8*x10,
    x4*x5 - x6*x10,
    x3*x12 - x9^2,
    x3*x11 - x7^2*x8,
    x3*x10 - x7*x8^2,
    x3*x6 - x8*x9,
    x3*x5 - x7*x9,
    x3*x4 - x8^3,
    x2*x12 - x5^2*x7,
    x2*x10 - x11^2,
    x2*x9 - x5*x7^2,
    x2*x8 - x7*x11,
    x2*x6 - x5*x11,
    x2*x4 - x10*x11,
    x2*x3 - x7^3,
    x1*x11 - x5^2*x6,
    x1*x10 - x5*x6^2,
    x1*x9 - x12^2,
    x1*x8 - x6*x12,
    x1*x7 - x5*x12,
    x1*x4 - x6^3,
    x1*x3 - x9*x12,
    x1*x2 - x5^3
]
\end{verbatim}

\subsection{Class Group}

Using the description above along with Theorem~\ref{T:cl}, we can find the class group for the invariant algebra
$\Z[E_6]^{\We}$. We know that the Weyl group $\We \cong O_6(2)$ contains a simple subgroup of index 2,
which is its unique nontrivial normal subgroup. 
It follows that the subgroup of diagonalizable reflections, $\cD$, is trivial. 
Hence, by Theorem~\ref{T:cl}(a), the class group is isomorphic to the factor 
$\RLambda/L$. 
This group is known \cite[(VIII) in Planche V]{Bou}:
\[
\feqn{\Cl(\Z[E_6]^{\We}) \cong \Z/3\Z}
\]

\section{Type $\rE_7$}
\label{SSS:E7} 

Lastly, we calculate the multiplicative invariants for the root system of type $\rE_7$.

\subsection{Root system, root lattice and Weyl group}

Here, $\E$ is the subspace of $\R^8$ that is orthogonal 
$\e_7 + \e_8$. The set of roots is 
\begin{multline}
\label{E:E7Phi}
\RPhi=\{ \pm \e_i \pm \e_j \mid 1\le i < j \le 6\} \quad \cup \quad \{\pm (\e_7-\e_8)\} \\
\cup \quad \{\tfrac{1}{2}(\e_7-\e_8+\sum_{i=1}^6 (-1)^{\nu(i)}\e_i\mid \sum_{i=1}^8 \nu(i) \text{ odd}\} 
\end{multline}
A base for this root system is given by
\[
\begin{aligned}
\alpha_1 &= \tfrac{1}{2}(\e_1+\e_8)-\tfrac{1}{2}(\e_2+\e_3+\e_4+\e_5+\e_6+\e_7) \\
\alpha_2 &= \e_1+\e_2\\
\alpha_i &= \e_{i-1}-\e_{i-2} \qquad (i=3,\dots,7)
\end{aligned}
\]
So our root lattice is $E_7=\bigoplus_{i=1}^7 \Z \alpha_i$.

The Weyl group for $\rE_7$ is the direct product of $\Z/2\Z$ and the unique simple group of order $1\,451\,520$ (which can be described as $\PSp_6(2)$).

The Weyl group $\We = \We(\rE_7)$ has order $2^{10}\,3^4\,5\,7 = 2\,903\,040$. By Bourbaki 
\cite[Exercise 3 on page 229]{Bou} (see also Humphreys \cite[Section 2.12]{Hum}), there is an 
isomorphism of groups
\[
\feqn{ \We \cong \{ \pm 1 \} \times O_7(2) }
\]
which arises similar to the earlier description of $\We(\rE_6)$: the form
$\frac{1}{2}(x,x)$ yields a non-degenerate quadratic form on the $7$-dimensional 
$\mathbb{F}_2$-vector space $E_7/2E_7$, and this form is preserved by the action of $\We$.
The restriction of the action to the rotation subgroup $\We^+ = \{ w \in \We \mid \det w = 1\}$
is an isomorphism $\We^+ \cong O_7(2)$, and the kernel of the action is $\{ \pm 1\}$.
The latter group $O_7(2)$ is the unique simple group of order $1\,451\,520$.

\subsection{Multiplicative $\We$-invariants}

We follow the outline of our treatment of type $\rE_6$. First, we find a Hilbert basis for the monoid
\[
M = \RLambda_+ \cap E_7
\]
First we list the fundamental weights with repeat to the above base $\{ \alpha_i\}$:
\[
\begin{aligned}
\varpi_1&= \e_8-\e_7 = 2\alpha_1+2\alpha_2+3\alpha_3+4\alpha_4+3\alpha_5+2\alpha_6+\alpha_7\\
\varpi_2&= \tfrac{1}{2}(\e_1+\e_2+\e_3+\e_4+\e_5+\e_6-2\e_7+2\e_8)\\
&= \tfrac{1}{2}(4\alpha_1+7\alpha_2+8\alpha_3+12\alpha_4+9\alpha_5+6\alpha_6 +3\alpha_7) \\
\varpi_3&= \tfrac{1}{2}(-\e_1+\e_2+\e_3+\e_4+\e_5+\e_6-3\e_7+3\e_8) \\
&=3\alpha_1+4\alpha_2+6\alpha_3+8\alpha_4+6\alpha_5+4\alpha_6+2\alpha_7 \\
\varpi_4&= \e_3+\e_4+\e_5+\e_6+2(\e_8-\e_7)\\
&= 4\alpha_1 + 6\alpha_2+8\alpha_3+12\alpha_4+9\alpha_5+6\alpha_6 + 3\alpha_7\\
\varpi_5&= \e_4+\e_5+\e_6+\tfrac{3}{2}(\e_8-\e_7) \\
&= \tfrac{1}{2}(6\alpha_1+9\alpha_2+12\alpha_3+18\alpha_4+15\alpha_5+10\alpha_6+ 5\alpha_7)\\
\varpi_6&= \e_5+\e_6-\e_7+\e_8\\
&= 2\alpha_1 + 3\alpha_2+4\alpha_3+6\alpha_4+5\alpha_5+4\alpha_6 +2\alpha_7\\
\varpi_7&= \e_6+\tfrac{1}{2}(\e_8-\e_7)\\
&=\tfrac{1}{2}(2\alpha_1+3\alpha_2+4\alpha_3+6\alpha_4+5\alpha_5+4\alpha_6+3\alpha_7)
\end{aligned}
\] 

\bigskip

\noindent\textbf{Hilbert basis of $M$}.
Note that $\varpi_1$, $\varpi_3$, $\varpi_4$ and $\varpi_6$ already belong to $E_7$.
Therefore,
\[
M = \Z_+ \varpi_1 \oplus \Z_+ \varpi_3 \oplus  \Z_+ \varpi_4 \oplus  \Z_+ \varpi_6 \oplus M' 
\cong \Z_+^4 \oplus M'
\]
with
\[
M' = E_7 \cap \bigoplus_{i=2,5,7} \Z_+ \varpi_i
\]
Now let $l_2\varpi_2 + l_5\varpi_5  + l_7\varpi_7 \in M'$, with $l_i\in \Z_+$. 
Then the coefficients of each $\alpha_i$ in our base must be integral. 
One can easily check that the coefficients of $\alpha_1, \alpha_3, \alpha_4$ and $\alpha_6$ 
are automatically integral. Looking at the coefficients of  the remaining elements of our base gives
\[
\begin{aligned}
\text{coefficient of } &\alpha_2:\qquad & &\tfrac{7}{2}l_2+ \tfrac{9}{2}l_5 + \tfrac{3}{2}l_7 \in \Z\\
&\alpha_5:& &\tfrac{9}{2}l_2+ \tfrac{15}{2}l_5 + \tfrac{5}{2}l_7\in \Z\\
&\alpha_7:& &\tfrac{3}{2}l_2+ \tfrac{5}{2}l_5+ \tfrac{3}{2}l_7\in \Z\\
\end{aligned}
\]
This reduces to the single condition $l_2+l_5+l_7\in 2\Z$, giving 
\[
M' \cong \{(l_2,l_5, l_7) \in \Z_+^3 \mid l_2+l_5+l_7\in 2\Z\}
\]
As above, one easily finds the following Hilbert basis for the monoid $M'$:
\[\footnotesize
\begin{aligned}
m_1&= (2,0,0) \\
m_2&= (0,2,0) \\
m_3&= (0,0,2) \\
m_4&= (1,1,0) \\
m_5&= (1,0,1) \\
m_{6} &= (0,1,1) \\
\end{aligned}
\]

\bigskip

\noindent\textbf{Structure of the invariant algebra $\Z[E_7]^{\We}$}.
Note that the monoid algebra $\Z[M']$ is just the second Veronese subalgebra $R^{(2)}$ of a polynomial algebra 
$R = \Z[t_2,t_5,t_7]$ in
three variables. The structure of such algebras has been explained in Corollary~\ref{L:Veronese}.
It follows that the invariant algebra $\Z[E_7]^{\We}$ has the following description:
\[
\Z[E_7]^{\We} \cong \Z[M] \cong \Z[M'] \otimes \Z[t_1,t_3,t_4,t_6] \cong R^{(2)}[t_1,t_3,t_4,t_6] 
\] 
a polynomial algebra in four variables over $R^{(2)}$. Fundamental invariants are given by 
the following four that correspond to the variables $t_1,t_3,t_4,t_6$,
\[
\osum(\varpi_1),\ \osum(\varpi_3),\ \osum(\varpi_4),\  \osum(\varpi_6)
\]
together with the following six invariants that
correspond to the above monoid generators $m_1,\dots,m_6$,
\[
\osum(\varpi_2)^2, \ \osum(\varpi_5)^2, \  \osum(\varpi_7)^2, \ \osum(\varpi_2)\osum(\varpi_5), \ 
\osum(\varpi_2)\osum(\varpi_7), \ 
\osum(\varpi_5)\osum(\varpi_7)
\]

\subsection{Class Group}

From the structure of the Weyl group, $ \We \cong \{ \pm 1 \} \times O_7(2) $, we know that the only nontrivial
normal subgroups of $\We$ are $\{ \pm 1 \}$ and $O_7(2)$. Only the former is an elementary abelian $2$-group, but 
$-1$ is not a reflection. By Lemma~\ref{L:diag} it follows that the subgroup of diagonalizable reflections, $\cD$, is trivial. 
Hence, by Theorem~\ref{T:cl}(a), the class group is isomorphic to the factor 
$\RLambda/L$, which is known \cite[(VIII) in Planche VI]{Bou}:
\[
\feqn{\Cl(\Z[E_7]^{\We}) \cong \Z/2\Z}
\]
Alternatively, we could arrive at the same conclusion using the fact
that the invariant algebra $\Z[E_7]^{\We}$ is a polynomial algebra over the second Veronese subalgebra 
$R^{(2)}$ of $R = \Z[t_2,t_5,t_7]$. Indeed, by
\cite[Theorem 8.1]{rF73}, it follows that $\Cl(\Z[E_7]^{\We}) \cong \Cl(R^{(2)})$, and the latter group was calculated in 
Corollary~\ref{L:Veronese}.

\bibliographystyle{amsplain}
\bibliography{References}

\end{document}